\def\maxwidth{ %
  \ifdim\Gin@nat@width>\linewidth
    \linewidth
  \else
    \Gin@nat@width
  \fi
}
\definecolor{fgcolor}{rgb}{0.345, 0.345, 0.345}
\definecolor{shadecolor}{rgb}{.97, .97, .97}
\definecolor{messagecolor}{rgb}{0, 0, 0}
\definecolor{warningcolor}{rgb}{1, 0, 1}
\definecolor{errorcolor}{rgb}{1, 0, 0}
\newtheorem{thm}{Theorem}[section]
\newtheorem{defn}[thm]{Definition}
\newtheorem{lemma}[thm]{Lemma}
\newtheorem{remark}[thm]{Remark}
\newtheorem*{thm*}{{\bf Main Theorem}}
\newcommand{\ds}{\displaystyle}
\newcommand{\norm}[1]{\left\Vert#1\right\Vert}
\newcommand{\abs}[1]{\left\vert#1\right\vert}
\newcommand{\set}[1]{\left\{#1\right\}}
\newcommand{\Intv}[1]{\left[#1\right]}
\newcommand{\rb}[1]{\left(#1\right)}
\newcommand{\C}{\mathbb{C}}
\newcommand{\XXint}{\int_{\mathbb{X}^2}}
\newcommand{\N}{\mathbb{N}}
\newcommand{\R}{\mathbb{R}}
\newcommand{\pB}{\pazocal{B}}
\newcommand{\pH}{\pazocal{H}}
\newcommand{\pD}{\pazocal{D}}
\newcommand{\T}{\mathbb{T}}
\def\XXint#1#2#3{{\setbox0=\hbox{$#1{#2#3}{\int}$ }
\vcenter{\hbox{$#2#3$ }}\kern-.6\wd0}}
\numberwithin{equation}{section}
\DeclareMathAlphabet{\pazocal}{OMS}{zplm}{m}{n}
\numberwithin{equation}{section}
\begin{document}
\title{Analytic characterization of high dimension weighted  special atom spaces}
\author{Eddy Kwessi\footnote{Corresponding author, Department of Mathematics, Trinity University, San Antonio, TX 78212, USA;
ekwessi@trinity.edu}\;\quad  and \quad \;Geraldo de Souza\footnote{Department of Mathematics, Auburn University, Auburn, AL 36849, USA.}}

\date{}
\maketitle

\begin{abstract}


Special atom spaces have been around for quite awhile since the introduction of atoms by R. Coifman in his seminal paper who led to another proof that the dual of the  Hardy space $H^1$ is in fact the space of functions of bounded means oscillations (BMO). Special atom spaces enjoy quite a few attributes of their own, among which the fact that they have   an analytic extension to  the unit disc. Recently, an extension of special atom spaces to higher dimensions was proposed, making ripe the possible exploration of the above extension in higher dimensions. In this paper we    propose an   analytic characterization of  special atom spaces  in higher dimensions.

  
\end{abstract}

\maketitle

MSC Classification: 32C20, 32C37, 32K05 32K12
\section{Introduction}


Let  $d$ be some positive  integer. We define the unit disk as 
$\mathbb{D}=\{z\in \C: \abs{z}<1\}$, the sphere  as $\T=\{z\in \C: \abs{z}=1\}$, the polydisk and polysphere respectively   as $\mathbb{D}^d$ and $\T^d$. Atoms were introduced by  Coifman in \cite{Coifman1974} as a tool to explicitly represent functions in the Hardy  space  $H^p$ for $0<p\leq 1$. The following definition was proposed:
\begin{defn}

Let $0<p\leq 1$ and an interval $J$ of $\R$. An atom is a function $b$ defined on the interval  $J$   and satisfying 
\begin{enumerate}
\item $\abs{b(\xi)}\leq \dfrac{1}{\abs{J}^{1/p}}$\;.
\item $\ds \int_{-\infty}^{\infty} \xi^kb(\xi)d\xi=0$, \quad for $0\leq k\leq \Intv{\frac{1}{p}}-1 $, where $[x]$ is the integer part of $x$\;.
\end{enumerate}
\end{defn}
\noindent From this definition, functions in $H^p(\R)$ could now be characterized via their atomic decomposition in the following theorem:
\begin{thm}[see \cite{Coifman1974}]
 Let $0<p\leq 1$. Then $f\in H^p(\R)$ if and only if there exist real 
numbers $a_i$,  atoms $b_i$ for $i\in \mathbb{N}$,  and  absolute constants $c, C$ such that such that
\[\ds f(\xi)=\sum_{i=0}^{\infty}a_ib_i(\xi)\quad \mbox{and}\quad  c\norm{f}_{H^p}\leq \sum_{i=0}^{\infty}\abs{a_i}^p \leq C \norm{f}_{H^p}\;.\]
\end{thm}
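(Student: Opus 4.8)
The plan is to prove the two implications separately: the ``if'' direction (reconstructing $f$ from a given family of atoms) is elementary and yields the left-hand estimate, while the ``only if'' direction (producing the decomposition) carries the real weight and yields the right-hand estimate.

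For the ``if'' direction I would first show that every atom has $H^p$-quasinorm bounded by one absolute constant. Fix an atom $b$ supported on an interval $J$ with center $x_J$, and use the equivalent description of $H^p$ through the nontangential maximal function $b^\ast=\sup_t\abs{b\ast\phi_t}$. On a fixed dilate of $J$ the size bound $\abs{b}\le\abs{J}^{-1/p}$ gives $b^\ast\lesssim\abs{J}^{-1/p}$, so this region contributes a bounded amount to $\int(b^\ast)^p$. Away from $J$, I would subtract from the smoothing kernel its Taylor polynomial of degree $[1/p]-1$ at $x_J$ and invoke the second condition in the definition of an atom; this produces the decay $b^\ast(\xi)\lesssim\abs{J}^{-1/p}\bigl(\abs{J}/\abs{\xi-x_J}\bigr)^{[1/p]+1}$, and since $p([1/p]+1)>1$ the tail $\int_{\abs{\xi-x_J}>2\abs{J}}(b^\ast)^p\,d\xi$ is finite with a bound independent of $J$. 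Hence $\norm{b}_{H^p}\le C_0$ uniformly. Given $f=\sum_i a_ib_i$, the $p$-subadditivity of $\norm{\cdot}_{H^p}^p$ (valid because $0<p\le1$) then gives $\norm{f}_{H^p}^p\le\sum_i\abs{a_i}^p\norm{b_i}_{H^p}^p\le C_0^p\sum_i\abs{a_i}^p$, which is the asserted lower inequality (read with the $H^p$-quasinorm in its $p$-th power normalization).

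For the converse I would use the maximal characterization $\norm{f}_{H^p}\approx\norm{f^\ast}_{L^p}$ and run a Calder\'on--Zygmund stopping-time argument on the dyadic level sets of $f^\ast$. For each $k\in\Z$ set $\Omega_k=\set{\xi\in\R:\;f^\ast(\xi)>2^k}$; these are open, nested and decreasing with $\abs{\Omega_k}<\infty$. I would take a Whitney decomposition of each $\Omega_k$ into essentially disjoint intervals $\{I_{k,j}\}$ whose lengths are comparable to their distance from $\R\setminus\Omega_k$, fix a smooth partition of unity $\{\eta_{k,j}\}$ subordinate to this cover, and realize $f$ as a telescoping sum $f=\sum_k(g_{k+1}-g_k)$ in the sense of distributions, where $g_k$ is the good part of the decomposition of $f$ at height $2^k$ (so $g_k$ is controlled pointwise by $2^k$ and the bad part is supported on $\Omega_k$). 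Each difference $g_{k+1}-g_k$ is then supported near the level-$k$ Whitney intervals and has size comparable to $2^k$ there. To turn these localized pieces into genuine atoms I would subtract, on each $I_{k,j}$, the projection of the piece onto the polynomials of degree $\le[1/p]-1$ taken with respect to the weight $\eta_{k,j}$; this is exactly the device that enforces the vanishing-moment condition. After normalization each block equals $a_{k,j}b_{k,j}$ with $b_{k,j}$ an atom on an interval comparable to $I_{k,j}$ and $\abs{a_{k,j}}\lesssim 2^k\abs{I_{k,j}}^{1/p}$.

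The coefficient bound then follows from the layer-cake comparison
\[
\sum_{k,j}\abs{a_{k,j}}^p\;\lesssim\;\sum_k 2^{kp}\sum_j\abs{I_{k,j}}\;=\;\sum_k 2^{kp}\abs{\Omega_k}\;\lesssim\;\int_{\R}(f^\ast)^p\,d\xi\;\approx\;\norm{f}_{H^p}^p ,
\]
which is the asserted upper inequality; relabeling $\{(a_{k,j},b_{k,j})\}$ as $\{(a_i,b_i)\}$ gives the stated series. I expect the genuinely delicate points to be two. The first is the moment-subtraction step: one must check that the polynomial corrections telescope correctly and redistribute mass only among neighboring Whitney intervals, so that the size, support and normalization of each $b_{k,j}$ survive. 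The second is upgrading convergence of $\sum_i a_ib_i$ from the distributional sense to convergence in $H^p$; this is where the uniform atomic bound $\norm{b_i}_{H^p}\le C_0$ from the first paragraph, combined with the finiteness of $\sum_i\abs{a_i}^p$ and $p$-subadditivity, is used to control the tails of the series.
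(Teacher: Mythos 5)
The paper does not prove this statement: it is quoted as background (Theorem 1.2, attributed to \cite{Coifman1974}), so there is no in-paper argument to compare yours against. Judged on its own, your outline is the standard and correct proof of the atomic decomposition of $H^p$ in the style of Coifman's original one-dimensional argument and its later codifications (Latter, Stein): uniform $H^p$-bounds for atoms via moment cancellation plus $p$-subadditivity for the easy inequality, and a Calder\'on--Zygmund/Whitney stopping-time decomposition of the level sets of the maximal function, with telescoping good parts and polynomial moment corrections, for the hard one. Your decay exponent is right --- with $N=[1/p]-1$ vanishing moments one gets $b^{\ast}(\xi)\lesssim \abs{J}^{-1/p}\bigl(\abs{J}/\abs{\xi-x_J}\bigr)^{[1/p]+1}$ and $p\bigl([1/p]+1\bigr)>1$ holds for all $0<p\le 1$, including the integer case $1/p\in\Z$ --- and the two delicate points you flag (the telescoping/redistribution of the polynomial corrections among neighboring Whitney intervals, and upgrading distributional convergence of $\sum_i a_i b_i$ to $H^p$-convergence) are precisely where the published proofs spend their effort, so your assessment of where the real work lies is accurate.
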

\noindent  Fefferman \cite{Fefferman1971} observed  that this result is in fact due to the duality between $H^1(\R)$ and the space of functions of bounded means oscillations (BMO), therefore providing another proof that the dual space of $H^1(\R)$ is in fact BMO. The era of the atomic decomposition therefore  started. One criticism of the atomic decomposition at the time was that it was too general, making it difficult or not very useful for applications. This atomic decomposition was proved to be quite useful in harmonic analysis. However, in an attempt to answer this criticism, Richard O'Neil and Geraldo De Souza  proposed an example of atoms defined on the interval $I=[0,1]$  that was  latter dubbed ``special atoms". This special atom has some very desirable properties as we will see in the sequel.

\begin{defn}\label{defn2}Consider $1\leq p<\infty$.
\begin{itemize}
\item[(a)]A  {\bf special atom of type 1} is a function $b:I\to \mathbb{R}$ such that
\[ b(\xi)=\left\{\begin{tabular}{ll}
$\dfrac{1}{|J|^{1/p}}\Intv{\chi_R(\xi)-\chi_L(\xi)}$, & \textrm{if $\xi\in J$} \\
1, &  \textrm{if $\xi\in I\backslash J$ }
\end{tabular}\right.\;,
\]
where $J$ is a subinterval of  $I$, $L$ and $R$ are the halves of $J$ such that $J=L\cup R$, and $|J|$ is the length of $J$\;.

\item[(b)]A  {\bf special atom of  type 2} is a function $c:J\to \mathbb{R}$ such that
\[c(\xi)=\dfrac{1}{|J|^{1/p}}\Intv{\chi_J(\xi)}\;,\]

where $J$ is an interval contained in $I$\;.
\end{itemize}
\end{defn}
\begin{remark} We observe that this definition can be extended on the unit ball of $\R^d$ by using dyadic decomposition, see \cite{Arcozzi2006}\;. 
\end{remark}
\noindent From this definition, they introduced the {\it special atom space} $B^p$ (for type 1 atom) defined on $J$, but with a different norm from the $L^p$-norm.
\begin{defn}
Let $1\leq p<\infty$. The special atom space $B^p$ (of type 1) is defined as 
\[ B^p=\set{f:I\to \mathbb{R}; f(\xi)=\sum_{n=0}^{\infty} \alpha_n b_n(\xi); \sum_{n=0}^{\infty} \abs{\alpha_n}<\infty}\;,\]
where the $b_n$'s are special atoms of type 1. The space $B^p$ is endowed with the norm
\[\norm{f}_{B^p}=\inf \sum_{n=0}^{\infty}\abs{\alpha_n},\] where the infimum is taken over all representations of $f$\;.
\end{defn}

\noindent The weighted special atom soon followed (see for instance \cite{DeSouza1989}) which gave rise a host of  very interesting properties, namely the analytic characterization. 
\begin{defn}\label{defweightedSpAtom}
 We define the {\bf weighted special atom (of type 1)} on $J$ as:
\[ b_w(\xi)=\frac{1}{w(J)}\left[ \chi_{_R}(\xi)-\chi_{_L}(\xi)\right]\;,\]
where \[w\in L^1(I)\quad \mbox{with}\quad w(J)=\int_Jw(\xi)d\xi, \quad \mbox{and $L,R$ are as in Definition \ref{defn2}}\;.\]
 The weighted special  atom space  is  the space $B_w$ of functions $f$ with atomic decomposition 
\[ f(\xi)=\sum_{n=0}^{\infty} \alpha_n b_{w,n}(\xi)\;,\]
endowed with the  Infimum norm.
\end{defn}
The importance of this definition can not be overstated. Indeed, weighted special atom spaces are invariant under  the Hilbert transform and they  contain some functions whose Fourier series diverge, see  \cite{DeSouza1994}. One of their most applicable features  is their connection to Haar wavelets, in that,  a Haar wavelet  function is just a special atom with weight $2^{-n/2}$,  \cite{Kwessi2019}. Weighted special atom spaces are also Banach equivalent to some Bergman-Besov-Lipschitz spaces (see \cite{DeSouza1985}), which leads  to a complete characterization of their lacunary functions, see  \cite{Kwessi2013}. Moreover, functions in $B_w$ have  analytic correspondences by integrating against   analytic functions whose real parts coincide with    the Poisson kernel. In particular, $B^1$ is Banach equivalent to the space of analytic functions $F$ on the complex unit disc for which $\ds F(z)=\frac{1}{2\pi}\int_0^{2\pi}\frac{e^{i\xi}+z}{e^{i\xi}-z}f(\xi)d\xi$. The question that was latter raised by Brett Wick in 2010 (personal communication with the first author) was whether this analytic  characterization could be achieved  in higher dimensions. In order to  entertain such a question, one has to,  for $d\geq2$,
\begin{enumerate}
\item first  provide a definition  of the  special atom and its weighted counterpart on $I^d$ so that its restriction to $I=[0,1]$ is the original special atom.
\item second, provide a definition of  the special atom space $B^p$ on  $I^d$.
\item third, verify that the Banach structure of $B^p$ is preserved.
\item fourth, set the conditions on the weight function $w$ on $I^d$ and define the weighted special atom  space $B_w$ on $I^d$.
\item fifth, define the analytic extension $F(\bm{z})$ of a function $f(\bm{\xi})\in B_w$ for\\ $\bm{z}=(z_1,z_2, \cdots, z_d)\in \mathbb{D}^d$ and $\bm{\xi}=(\xi_1, \xi_2,\cdots, \xi_d)\in I^d$.
\item sixth, verify that $B_w$ and its analytic extension $A_w^1$ are indeed Banach-equivalent under the above conditions.  
\end{enumerate}
\begin{remark}
 The first step  was recently accomplished in \cite{Kwessi2019}. Also the requirement that by restricting to $I=[0,1]$ we obtain the original special atom is for simplicity sake. The argument is important in high dimensions to prove for example in the case of Haar wavelets that we obtain an orthonormal system. However, there exist numerous ways to define atoms similar to the special atom. 
 \end{remark}
 We end this introductory part by recalling the definition of the weighted  Lipschitz class of functions.
 
 \begin{defn} Let $w$ be a weight function defined on $J=[a-h,a+h]\subseteq I$. The weighted Lipschitz class  is the class of continuous functions defined as 
 \[\Lambda_w=\set{f:\R \to \R: \norm{f}_{\Lambda_w}=\sup_{h>0,\xi}\;\abs{\frac{f(\xi+h)+f(\xi-h)-2f(\xi)}{w(J)}}<\infty.}\] 
 \end{defn}
 \noindent For completeness, recall that for $w(t)=t$, $\Lambda_w$ is the Zygmund class and for $w(t)=t^{\alpha}, 0<\alpha<2$, $\Lambda_w$ is the Lipschitz class of order $\alpha$. It  was proved in  \cite{DeSouza1989} that the dual space $B_w^*$ of $B_w$ is $\Lambda_w'=\set{f': f\in \Lambda_w}$, where $f'$ is understood in the sense of distributions.\\
 \noindent The remainder of the  paper   is organized as follows: In Section \ref{sect2}, we show how to extend weighted special atoms to high dimensions. In the last step,  we state  the  Main Theorem in Section \ref{mainresults}, and we will make concluding remarks in Section \ref{conclusion}.

\section{High Dimension Extension}\label{sect2}
Let $\bm{z}=(z_1,z_2,\ldots,z_d)\in \C^d$ for an integer $d\geq 1$. In fact, in the sequel, bold-faced symbols will represent vectors. We start out by proposing   a definition of  a weighted special atom in higher dimensions, for a general weight function $w$. When $w$ is the Lebesgue measure,  the interested reader can refer to \cite{Kwessi2019} for a more constructive approach in the definition.
\begin{defn}
Let  $\bm{\xi}=(\xi_1,\xi_2,\ldots, \xi_d) \in \R^d$ and $1\leq p<\infty.$
\begin{itemize}
\item[(a)]   Let $\ds J:=\prod_{j=1}^d[a_j-h_j,a_j+h_j]$ where $a_j, h_j$ are real numbers with $h_j>0$. The  weighted special atom (of type 1) on $J$, a sub-interval  of $I^d$, is defined as \[\ds b_w(\bm{\xi})=\frac{1}{w(J)}\left\{ \chi_{_{R}}(\bm{\xi})-\chi_{_{L}}(\bm{\xi})\right\},\] where $\ds w(J)=\int_Jw(\bm{\xi})d\bm{\xi}$  and $\ds R=\bigcup _{j=1}^{2^{d-1}}J_{i_j}$ for some $(i_1, i_2, \cdots, i_{2^{d-1}}\in \set{1,1, \cdots, 2^d})$ with $i_1<i_2<\cdots <i_{2^{d-1}}$ and $L=J\backslash R$. $\set{J_1, J_2,\cdots, J_{2^d}}$ is the collection of sub-cubes of $J$, cut by the hyperplanes $x_1=a_1, x_2, \cdots, x_d=a_d$, and  $\chi_A$ represents the characteristic function of set $A$. 
\item[(b)] The weighted special atom space  $B_w$ is the space of real-valued functions $f$ defined on $I^d$ such that 
\[f(\bm{\xi})=\sum_{n=0}^{\infty} \alpha_n b_{w,n}(\bm{\xi})\quad \mbox{with $\ds \sum_{n=0}^{\infty} \abs{\alpha_n}<\infty $},\]
endowed with the norm \[\ds \norm{f}_{B_w}=\inf \sum_{n=0}^{\infty} \abs{\alpha_n}\;,\]
where the infimum is taken over all possible representations of $f$.
\end{itemize}
\end{defn}

\noindent For example, for  $d=2$, for real numbers $a_1, a_2, h_1$, and $h_2$ such that  $h_1, h_2>0$,  consider a sub-interval $J$ of $I^d$ defined as  $J=[a_1-h_1, a_1+h_1]\times[a_2-h_2,a_2+h_2]\;.$
\noindent Let \begin{eqnarray*}
L_{1}&=&[a_1-h_1,a_1]\times[a_2-h_2,a_2], \quad L_{2}=[a_1-h_1,a_1)\times[a_2,a_2+h_2],\\
R_{1}&=& [a_1,a_1+h_1]\times[a_2-h_2,a_2),\quad R_{2}=(a_1,a_1+h_1]\times(a_2,a_2+h_2]\;.
\end{eqnarray*}
Consider  \[L=L_{1}\cup R_{2}\quad \mbox{and} \quad  R=L_{2}\cup R_{1}\;.\]
The special atom  $b(\xi_1,\xi_2)$ is then defined  as:
\begin{eqnarray*}\label{eq:bn(x,y)}
b_w(\xi_1,\xi_2)&=&\frac{1}{w(J)}\bigg\{ \chi_{_{R}}(\xi_1,\xi_2)-\chi_{_{L}}(\xi_1,\xi_2)\bigg\}\\
&=& \frac{1}{w(J)}\bigg\{ \chi_{_{L_{2}}}(\xi_1,\xi_2)+\chi_{_{R_{1}}}(\xi_1,\xi_2)-\chi_{_{L_{1}}}(\xi_1,\xi_2)-\chi_{_{R_{2}}}(\xi_1,\xi_2)\bigg\}\;.
\end{eqnarray*}
For $d\geq 2$, we consider  $\ds J=\prod_{j=1}^d[a_j-h_j,a_j+h_j]$ where $a_j, h_j$ are real numbers with $h_j>0$. \\
\noindent For $j=1,\cdots,d$,  we define $b_w(\bm{\xi})$ similarly. Figure \ref{fig1} below is an illustration of $b_w$ for $d=2$ (a) and $d=3$ (b) when $w$ is the Lebesgue measure.
\begin{figure}[H] 
   \centering
   \begin{tabular}{cc}
   {(a)} & { (b)}\\
   \includegraphics[scale=0.5]{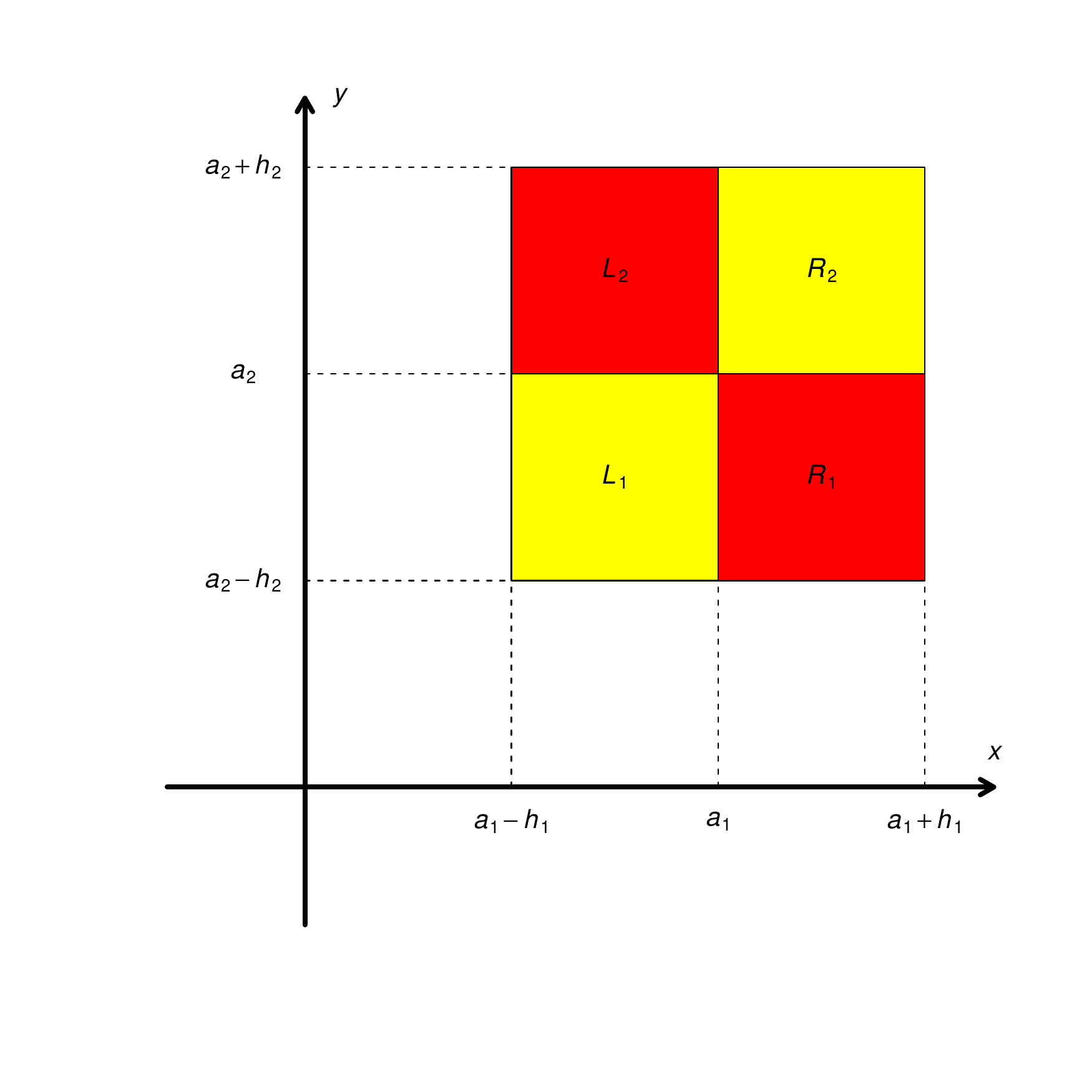} &
   \includegraphics[scale=0.5]{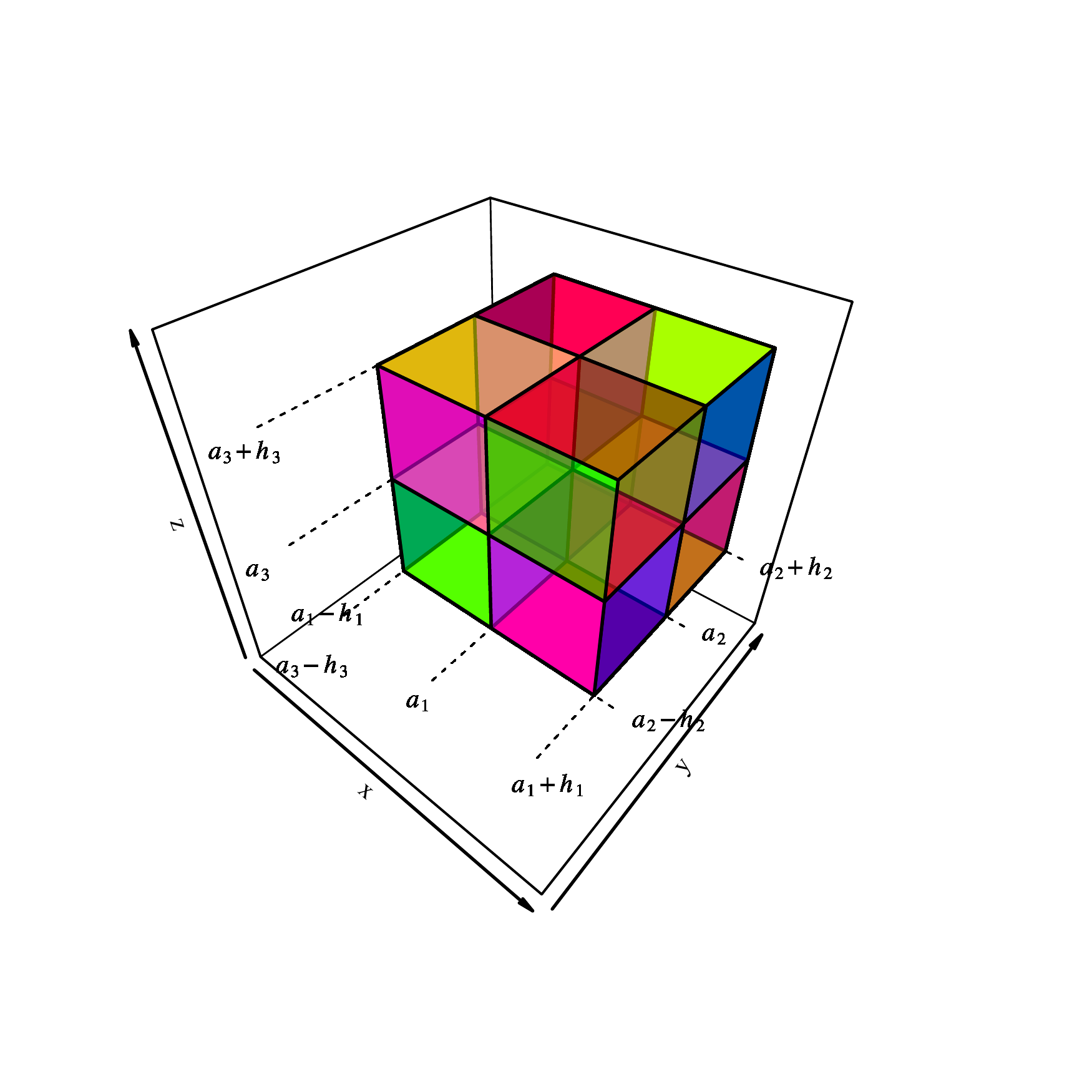} 
   \end{tabular}
   \caption{(a) represents a special Atom for $d=2$ and (b) represents the special atom for $d=3$. Note that for $d=3$, the color areas represent the different partitions of $J_j$ into subintervals $R$ and $L$ intervals.}
   \label{fig1}
\end{figure}

 With this definition, we can prove the following theorem about the Banach structure of $B_w$.
\begin{thm}\label{Theorem2.2} For $1\leq p<\infty$,
$\left(B_w, \norm{\cdot}_{B_w}\right)$ is a Banach space.
\end{thm}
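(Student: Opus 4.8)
My plan is to realize $\left(B_w,\norm{\cdot}_{B_w}\right)$ as a quotient of an $\ell^1$-space and to invoke the fact that a quotient of a Banach space by a \emph{closed} subspace is again Banach; equivalently, one may verify completeness directly through the criterion that a normed space is complete precisely when every absolutely convergent series converges. Concretely, let $\mathcal{A}=\set{b_{w,a}}_{a}$ denote the set of all weighted special atoms of type $1$ on the sub-intervals of $I^d$, and let $\ell^1(\mathcal{A})$ be the Banach space of absolutely summable families $\bm{\alpha}=(\alpha_a)_a$ (only countably many $\alpha_a$ nonzero). Define the linear map $T:\ell^1(\mathcal{A})\to Y$ by $T(\bm{\alpha})=\sum_a \alpha_a b_{w,a}$, where $Y$ is a complete Hausdorff ambient space to be specified in which the atoms live. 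The range of $T$ is exactly $B_w$, and by construction the infimum norm $\norm{f}_{B_w}$ coincides with the quotient norm of $\ell^1(\mathcal{A})/\ker T$. The routine norm axioms (absolute homogeneity and the triangle inequality) follow immediately by rescaling and by concatenating two atomic decompositions, so the entire statement reduces to showing that $\ker T$ is closed in $\ell^1(\mathcal{A})$, which in turn yields both the positive-definiteness of $\norm{\cdot}_{B_w}$ and the completeness of the quotient.

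For the reader who prefers a hands-on argument, the completeness can be exhibited directly. Given a sequence $(f_k)$ in $B_w$ with $\sum_k\norm{f_k}_{B_w}<\infty$, I would choose for each $k$ a representation $f_k=\sum_n \alpha_{k,n}b_{w,k,n}$ with $\sum_n\abs{\alpha_{k,n}}\le \norm{f_k}_{B_w}+2^{-k}$. Then the combined family $\set{\alpha_{k,n}}_{k,n}$ satisfies $\sum_{k,n}\abs{\alpha_{k,n}}\le \sum_k\left(\norm{f_k}_{B_w}+2^{-k}\right)<\infty$, so $f:=\sum_{k,n}\alpha_{k,n}b_{w,k,n}$ is a bona fide element of $B_w$. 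The tail estimate
\[
\norm{f-\sum_{k=1}^{K}f_k}_{B_w}=\norm{\sum_{k>K}\sum_n \alpha_{k,n}b_{w,k,n}}_{B_w}\le \sum_{k>K}\sum_n\abs{\alpha_{k,n}}\xrightarrow[K\to\infty]{}0
\]
then shows that $\sum_k f_k$ converges to $f$ in $B_w$, establishing completeness.

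The delicate point in both arguments — and the step I expect to be the main obstacle — is to give precise meaning to the defining series $f=\sum\alpha_n b_{w,n}$ and to secure positive-definiteness, i.e. $\norm{f}_{B_w}=0\Rightarrow f=0$. Since the weighted atoms need not have uniformly bounded $L^1(I^d)$-norms for a general weight, I would not sum in $L^1$; instead I would fix a complete Hausdorff topological vector space $Y$ into which every atom embeds continuously, and interpret every atomic series as convergent in $Y$. The natural choice is to pair against test functions: taking $Y=\mathcal{D}'(\operatorname{int} I^d)$ (equivalently, viewing $B_w$ inside the dual of the weighted Lipschitz class $\Lambda_w$, in the spirit of the duality $B_w^*=\Lambda_w'$ of \cite{DeSouza1989}), the cancellation $\int_J b_{w,n}=0$ together with the reflection symmetry of the cells $R,L$ yields, for each fixed smooth $\phi$, a bound of the form $\abs{\int_{I^d} b_{w,n}\phi}\le C_\phi$ uniform over all atoms, provided $w$ is bounded below on $I^d$ (as holds for a continuous strictly positive weight). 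This makes $T:\ell^1(\mathcal{A})\to Y$ continuous into a Hausdorff space, whence $\ker T=T^{-1}(\set{0})$ is closed; the quotient $\ell^1(\mathcal{A})/\ker T\cong B_w$ is therefore Banach and, because $Y$ is Hausdorff and the embedding $B_w\hookrightarrow Y$ is injective, $\norm{f}_{B_w}=0$ forces $f=0$ as a function on $I^d$. Pinning down the ambient space $Y$ and the uniform pairing bound (hence the precise hypotheses on $w$) is thus the crux; once it is in place, the Banach property is automatic.
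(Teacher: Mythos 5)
Your second, hands-on argument is essentially the paper's own proof: the paper likewise reduces completeness to the criterion that every absolutely convergent series converges, selects for each $f_n$ a representation with $\sum_k\abs{\alpha_{n_k}}<\norm{f_n}_{B_w}+\epsilon/2^n$, and sums the resulting double family; your displayed tail estimate even fills in a small step the paper abbreviates, since the paper only records the countable subadditivity $\norm{\sum_n f_n}_{B_w}\le\sum_n\norm{f_n}_{B_w}$, whereas convergence of the partial sums requires applying that bound to the tails, exactly as you do. Your quotient framing $B_w\cong\ell^1(\mathcal{A})/\ker T$ is an equivalent repackaging of the same idea and buys nothing extra unless the closedness of $\ker T$ is actually established. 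Where you genuinely diverge is your third paragraph: you correctly observe that the definition of $B_w$ presupposes some Hausdorff ambient space in which the atomic series converge, and that positive definiteness ($\norm{f}_{B_w}=0\Rightarrow f=0$) does not follow from the infimum formula alone. The paper is silent on both points --- its proof verifies only the triangle inequality and the series criterion --- so your remark flags a gap in the paper rather than a defect in your argument. Be aware, however, that your proposed repair is itself left conditional: the uniform pairing bound is asserted only under extra hypotheses on $w$ (boundedness below, or the duality with $\Lambda_w'$ from \cite{DeSouza1989}) that are not part of the theorem's statement, so as written neither your proof nor the paper's fully certifies that $\norm{\cdot}_{B_w}$ is a norm rather than a seminorm; completing that step would be a genuine improvement on the published argument.
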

\begin{proof} The proof can be seen in Section \ref{prooftheorem2.2} below.
\end{proof}
\begin{defn} \mbox{}\\
   Consider the function $P: \mathbb{D}^d\times I^d$ defined as \[\ds P(\bm{z},\bm{\xi})=\prod_{j=1}^d P_j(z_j,\xi_j)\quad \mbox{where} \quad P_j(z_j,\xi_j)=\frac{e^{i\xi_j}+z_j}{e^{i\xi_j}-z_j}\;.\] 
   \end{defn}
 \noindent We observe that for fixed $1\leq j\leq d$ and $z_j\in \mathbb{D}$,   $\mbox{Re}(P_j(z_j,\xi_j))$ is the Poisson Kernel. \\
  \noindent For a function $F$ defined on $\mathbb{D}^d$, we  define $F'(\bm{z})$ as 
 \[F'(\bm{z})=(f_1(\bm{z}), \cdots, f_d(\bm{z})), \quad \mbox{where}\quad f_j(\bm{z})=\frac{\partial F(\bm{z})}{\partial z_j}\;.\]
In the sequel $d\bm{\xi}=d\xi_1 d\xi_2\cdots d\xi_d$, and this  will be the case for similar  bold-faced symbols. Also for a set $A\subseteq S $, we will denote by $A^c=S\backslash A$.


\noindent Now we give the definition of special weights functions that will be necessary for the Proof of the main theorem
\begin{defn}
Let $w$ be a real-valued function defined  on $[0,1]$. Let $m$ and $n$ be positive integers. 
\begin{itemize}
\item[(a)] 
\noindent Then $w$ is said to be {\bf Dini} of order $m\geq 1$ and we denote $w\in \pD_{m}$ if   $\ds \frac{w(u)}{u^m}\in L^1(0,1)$, and there exists an absolute constant $C$ for which \[\ds \int_0^{u} \frac{w(\xi)}{\xi^m}d\xi\leq Cw\left(u\right),\quad \mbox{for $0<u<1$}\;.\]
\item[(b)] A function $w:[0,\infty) \to \R$ is said to be in the class $\pB_n$ for some positive integer $n$ and we denote $w\in \pB_n$ if for $0<u<1$,
\begin{enumerate}
\item  $w$ is increasing and $w(0)=0$\;.
\item There exists a constant $C$ such that $\ds \int_u^1 \frac{w(\xi)}{\xi^{n+1}}d\xi\leq C \frac{w(u)}{u^n}$\;.
\end{enumerate}
\item [(c)] Let $\bm{\xi}=(\xi_1,\xi_2,\cdots,\xi_d)\in \R^d$. For $1\leq j\leq d$, consider  weight functions $w_j$ defined on $\R_{+}$. We define the product  weight function $w(\bm{\xi})$ as 
\begin{equation}\label{eqn:weight} w(\bm{\xi})=\prod_{j=1}^d w_j(\xi_j),
\end{equation}
\item[(d)] A weight $w$ is said to be in the class $\mathcal{B}_p$ on $[0,1]$ if there exists a constant $C$ such that for any interval $J\subseteq [0,1]$ with center $\xi_J$, we have 
\[\frac{\abs{J}^p}{w(J)}\int_{J^c} \frac{w(\xi)}{\abs{\xi-\xi_J}^p}d\xi\leq C\;.\]
\end{itemize}
\end{defn}
\begin{remark}\mbox{}
\begin{enumerate}
 \item We observe that $w\in \mathcal{B}_p$ for some $p>1$ if and only  if  $w$ is a doubling measure, that is, there exists an absolute constant $C$ such that 
\[w[Q_{2h}(\xi)]\leq Cw[Q_h(\xi)],\quad \mbox{where $Q_h(\xi)=\{t\in \R: \abs{\xi-t}\leq h\}$}\;.\]
The class of doubling weights will be referred to as $\mathcal{D}$.
\item The  Muckenhoupt class of  weights, (see \cite{Muckenhoupt1972}) \[\mathcal{A}_p=\set{w \in L^1: \left(\frac{1}{\abs{I}}\int_I w(\xi)d\xi\right)\rb{\frac{1}{\abs{I}}\int_I w^{1/1-p}(\xi)d\xi}^{p-1}<\infty}\] is strictly contained in the class   $\mathcal{B}_p$.
\end{enumerate}
\end{remark}
\noindent The following is an important lemma relating class  $\pD_m, \pB_n$ and $\mathcal{D}$.
\begin{lemma}\label{Lem1} Let $w$ be a weight  function. Then for all  integer $ n\geq 1$, we have:
\[\pD_{1}\cap \pB_n\subseteq \mathcal{D}\;.\]

\end{lemma}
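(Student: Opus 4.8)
The plan is to reduce the measure-doubling condition $w(Q_{2h}(\xi))\le C\,w(Q_h(\xi))$ to two pointwise facts about $w$ as a function, and then to assemble these according to whether the interval $Q_h(\xi)=[\xi-h,\xi+h]$ sits away from the origin or straddles it. Writing $w(Q_h(\xi))=\int_{Q_h(\xi)}w(\xi')\,d\xi'$ and using that $w$ is nonnegative and supported on $[0,\infty)$, the only genuinely delicate scales are the small ones near $0$: for intervals contained in a fixed $[\delta,1]$ the weight satisfies $0<w(\delta)\le w\le w(1)$ by monotonicity, so $w(Q_h(\xi))$ is comparable to $|Q_h(\xi)|$ and doubling is immediate. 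I will therefore concentrate on small $h$.

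The first pointwise fact is a doubling inequality for the function itself: there is $A>0$ with $w(2u)\le A\,w(u)$ for all $0<u<\tfrac12$. For $u\le \tfrac14$ this comes directly from the second $\pB_n$ condition: since $w$ is increasing,
\[
w(2u)\int_{2u}^{4u}\frac{d\xi}{\xi^{n+1}}\;\le\;\int_{2u}^{4u}\frac{w(\xi)}{\xi^{n+1}}\,d\xi\;\le\;\int_{u}^{1}\frac{w(\xi)}{\xi^{n+1}}\,d\xi\;\le\;C\,\frac{w(u)}{u^{n}},
\]
and evaluating $\int_{2u}^{4u}\xi^{-n-1}\,d\xi=\tfrac{1}{n}(2u)^{-n}(1-2^{-n})$ isolates $w(2u)\le A\,w(u)$ with $A=Cn2^{n}/(1-2^{-n})$; for $u\in(\tfrac14,\tfrac12)$ one uses $w(2u)\le w(1)\le \big(w(1)/w(\tfrac14)\big)w(u)$. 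The second pointwise fact uses $\pD_1$: since $\xi\le u$ on $(0,u)$ gives $1/\xi\ge 1/u$,
\[
\frac1u\int_0^{u}w(\xi)\,d\xi\;\le\;\int_0^{u}\frac{w(\xi)}{\xi}\,d\xi\;\le\;C\,w(u),
\]
so $\int_0^u w\le C\,u\,w(u)$; combined with the trivial lower bound $\int_0^u w\ge \int_{u/2}^u w\ge \tfrac{u}{2}w(u/2)$ and the first fact this shows $\int_0^u w \asymp u\,w(u)$, i.e.\ the mass of $[0,u]$ is comparable to $u\,w(u)$.

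Finally I assemble the estimate. If $\xi\ge 2h$ then $\xi-2h\ge0$ and monotonicity gives $w(Q_{2h}(\xi))\le 4h\,w(\xi+2h)$ and $w(Q_h(\xi))\ge 2h\,w(\xi-h)$; since $\xi\ge 2h$ forces $\xi+2h\le 2\xi\le 4(\xi-h)$, applying $w(2u)\le A\,w(u)$ twice yields $w(\xi+2h)\le A^2 w(\xi-h)$, so the ratio is at most $2A^2$. If $\xi<2h$ the interval reduces to $[0,\xi+2h]\subseteq[0,4h]$, so by the second fact $w(Q_{2h}(\xi))\le \int_0^{4h}w\le 4C\,h\,w(4h)$, while $w(Q_h(\xi))\ge \int_0^{h}w\ge \tfrac{h}{2}w(h/2)$; three applications of the first fact give $w(4h)\le A^{3}w(h/2)$ and hence a ratio bounded by $8CA^{3}$, uniformly. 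The main obstacle is precisely this straddling regime $\xi<2h$ together with intervals that also meet the right endpoint $1$: there one must truncate $Q_h(\xi)$ to $[0,1]$ and verify that the lower bound on $w(Q_h(\xi))$ survives the truncation, and one must assume $w>0$ on $(0,1)$ to avoid a degenerate $0/0$ comparison (a weight vanishing on an initial segment is increasing with $w(0)=0$ yet trivially fails doubling there). Away from these endpoint effects, the two pointwise facts, coming respectively from $\pB_n$ and $\pD_1$, combine to give a doubling constant independent of $\xi$ and $h$, which is the assertion $\pD_1\cap\pB_n\subseteq\mathcal{D}$.
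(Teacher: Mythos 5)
Your proof is correct in substance, but it takes a genuinely different route from the paper's. The paper never touches the measure-doubling condition directly: it fixes $u$ with $2u\le 1$, splits $\int_0^{2u}w(\xi)\xi^{-1}\,d\xi=\int_0^u+\int_u^{2u}$, bounds the first piece by $Cw(u)$ using $\pD_1$ and the second by $C2^{n+1}w(u)$ using the pointwise comparison $\frac{1}{\xi u^n}\le\frac{2^{n+1}}{\xi^{n+1}}$ on $[u,2u]$ together with $\pB_n$, and then simply declares $w\in\mathcal{D}$; the bridge from this doubled-scale Dini inequality to doubling of the measure $w(\xi)\,d\xi$, i.e.\ $w[Q_{2h}(\xi)]\le Cw[Q_h(\xi)]$, is left implicit. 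You instead prove the measure-doubling statement as literally defined: you extract pointwise doubling $w(2u)\le Aw(u)$ from the $\pB_n$ integral over $[2u,4u]$, extract the mass bound $\int_0^u w\le Cu\,w(u)$ from $\pD_1$, and assemble the two by cases on the position of $Q_h(\xi)$ relative to the origin. This is longer, but it buys exactly the bridge the paper omits, with explicit constants; the paper's computation buys brevity at the cost of leaving the equivalence with the doubling-measure definition of $\mathcal{D}$ unargued. Two small repairs to your write-up: for $h\le\xi<2h$ the containment $[0,h]\subseteq Q_h(\xi)$ you invoke is false (take $\xi=\tfrac{3h}{2}$), but the lower bound you need survives, since then $Q_h(\xi)\supseteq[h,2h]$ and monotonicity gives $w(Q_h(\xi))\ge h\,w(h)\ge\tfrac{h}{2}w(h/2)$; and the positivity hypothesis $w>0$ on $(0,1)$ that you add at the end is not an extra assumption but a consequence of $\pB_n$ for nontrivial $w$, since $\int_u^1 w(\xi)\xi^{-n-1}\,d\xi\le C\,w(u)u^{-n}$ forces $w(u)>0$ as soon as $w$ is positive somewhere in $(u,1)$.
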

\begin{proof}
Fix $n\geq 1$ and let $w\in \pD_{1}\cap \pB_n$. We would like to show that for $u$ such that $2u\leq 1$ there exists an absolute constant $C$ such that \[\int_0^{2u} \frac{w(\xi)}{\xi}d\xi\leq C w\left(u\right)\;.\]
We have that \[ \int_0^{2u} \frac{w(\xi)}{\xi}d\xi=\int_0^{u} \frac{w(\xi)}{\xi}d\xi+\int_{u}^{2u} \frac{w(\xi)}{\xi}d\xi=I_1+I_2\;.\]
On one hand,  since $w\in \pD_{1}$,   there exists $C>0$ such that $I_1\leq C\cdot w\left(u\right)$.
On the other hand,
\begin{eqnarray*}
I_2= \int_{u}^{2u} \frac{w(\xi)}{\xi}d\xi &=& u^n \int_{u}^{2u} \frac{w(\xi)}{\xi u^n}d\xi\\
&\leq & u^n 2^{n+1} \int_{u}^{2u} \frac{w(\xi)}{\xi^{n+1}}d\xi, \quad \mbox{since $\frac{1}{\xi u^n}\leq \frac{1}{u^{n+1}} \leq \frac{2^{n+1}}{\xi^{n+1}}$}\\
&\leq &   C \cdot 2^{n+1}\cdot w\left(u\right), \quad \mbox{since $w\in \pB_n$}\\
&\leq & C\cdot w(u)\;.
\end{eqnarray*}
It follows that $I_1+I_2\leq C \cdot w(t)$ and thus $w\in \mathcal{D}$\;.
\end{proof}

\begin{defn} A product  weight  $w\in \pD_m$ (respectively $w\in \pB_n$ or $w\in \mathcal{B}_p$) if for each $1\leq j\leq d$,  the weight $w_j\in \pD_m$ (respectively $w_j\in \pB_n$ or $w_j\in \mathcal{B}_p$) for integers $m,n\geq 1$ and a real number $p>1$.
\end{defn}
We now introduce spaces of analytic functions that will be proven to be the analytic characterizations of  weighted special atom spaces.
\begin{defn}
Let $1\leq p<\infty$ be a real number. Given a function $w: \mathbb{D}^d\to \R_+$, we will  consider $A_w^p$  as  the space   of analytic functions $F$ defined on $\mathbb{D}^d$ such that 
\begin{equation}\label{eqn:AnalWeightedSpecialAtomSpace1}
\norm{F}_{A_w^p}=\abs{F(\bm{0})}+\frac{1}{(2\pi)^d}\int_{\mathbb{D}^d} \abs{F'(\bm{r}e^{i\bm{\xi}})}^pw(\bm{r}e^{i\bm{\xi}})d\bm{\xi}d\bm{r}<\infty,
\end{equation}
where 
\begin{equation}\label{eqn:Fderivative}
\abs{F'(\bm{r}e^{i\bm{\xi}})}=\norm{F'(\bm{r}e^{i\bm{\xi}})}_2=\sqrt{\abs{f_1}^2+\cdots+\abs{f_d}^2}\quad \mbox{and $\bm{z}=\bm{r}e^{i\bm{\xi}}$}\;.
\end{equation}
\end{defn}
\begin{remark}\mbox{}
\begin{enumerate}
\item We note that  for $d=1$ and $w(re^{i\xi})=1$, $A_w^1$  is contained in  the Hardy space $H^1(\mathbb{D}^d)$. 
\item By Holder's inequality, $A_w^p\subseteq A_w^1$ for $p>1$. In particular, $A_w^1$ contains   the weighted Dirichlet space $\mbox{D}_w$ (see \cite{IdrissiElFallah}) of analytic  functions $F$ such that 
\[\int_{\mathbb{D}} \abs{F'(re^{i\xi})}^2w(re^{i\xi})drd\xi<\infty\;.\]
\item \noindent When $r=\abs{z}^2$ and  $w(re^{i\xi})=(1-r)^{\alpha}=(1-\abs{z}^2)^{\alpha}$ for $ \alpha>0$, then $A_w^1$ contains the Bloch space $\mbox{B}_{\alpha}$ (see \cite{KeheZu}) of analytic functions $F$ such that 
\[\sup_{\mathbb{D}}\left\{ (1-\abs{z}^2)^{\alpha}\abs{F'(z)}\right\}<\infty\;.\]
\end{enumerate}
\end{remark}
\begin{defn}
Let $f\in B_w$. We define the   analytic extension $F$ of $f$ as  
 \begin{equation}\label{eqn1:analF}
\ds F(\bm{z})=\frac{1}{(2\pi)^d}\int_I P(\bm{z},\bm{\xi}) f(\bm{\xi}) d\bm{\xi}\;, 
\end{equation} 
in the sense that we can recover $f$ by taking the radial limit
\[f(\bm{\xi})=\lim_{\bm{r}\to 1}\mbox{Re}\;F(\bm{r}e^{i\bm{\xi}})\;,\]
 where the limit is taken  element-wise. The space $A_w^1$  defined in equation \eqref{eqn:AnalWeightedSpecialAtomSpace1}  for some $f\in B_w$ will be referred to  as the analytic extension of $B_w$.

\end{defn}
\begin{remark}\mbox{}
\begin{enumerate}
\item We observe that in the definition of the analytic extension $F$ in \eqref{eqn1:analF}, the function $\log(F(z))$ can be viewed as the outer function whereas $f(\xi)$ can be viewed as the inner function, similarly to  a Beurling factorization, see for example Definition 17.14 in \cite{Rudin1987}.\\
\item Additionally, in this definition, if one choose $f(\bm{\xi})=\log(1-\abs{b(\bm{\xi})}^2)^{\frac{1}{2}}$, then  we can define the function $a(z)$, the  so called-{\it Pythagorean mate of $b  \in \pH(b)$} (The interested reader can refer for example  to \cite{FricainMashreghi} for  more on these spaces) as 
\[a(\bm{z})=exp\rb{\int_I \frac{\bm{\xi}+\bm{z}}{\bm{\xi}-\bm{z}}\log(1-\abs{b(\bm{\xi})}^2)^{\frac{1}{2}}d\bm{\xi}}\;\]
This suggests that the space $A_w^p$ is closely related to the theory of $\pH(b)$ spaces, but more importantly, can be used as a gateway to their  study in higher dimensions. We know that there is an extensive literature on these spaces $\pH(b)$ in one dimension, see \cite{Beurling1948,FricainMashreghi, BlandigneresEtAl}.
\item \noindent Moreover, if $f\in L^p(\T^d)$ for $1<p<\infty$, then by Theorem 17.26 in  \cite{Rudin1987}, the analytic extension $F\in H(\mathbb{D}^d)$, the Hardy's space on the polydisk.
\end{enumerate}
\end{remark}
We will prove in the sequel  that when the weight function $w$ satisfies certain conditions, then the spaces $B_w$ and $A_w^1$ are in fact isometric to each other, that is, the inclusion operator $G: B_w\to A_w^1, G(f)=F$ is a Banach isometry.

\section{Main results}\label{mainresults}
Now we can now state our main theorem:
\begin{thm*}
Let $J_j=[a_j-h_j,a_j+h_j], \ds J=\prod_{j=1}^d J_j$. Let $w$ be a weight defined on J. Let  $A_{w}$ be the space of analytic functions defined above. Then we have the following: 

\begin{itemize}
\item[(a)]  $B_w\subseteq A_w^1$ if and only if  $w(\bm{r}e^{i\bm{\xi}})\equiv w(\bm{\xi})$ is a product weight and   $w\in \mathcal{B}_2$.
\item[(b)] $B_w$ is Banach equivalent to  $A_w^1$ if and only if $\ds w(\bm{r}e^{i\bm{\xi}})\equiv \frac{w(1-\bm{r})}{1-\bm{r}}$ is a product weight and $w\in \pD_1\cap \pB_2$.
\end{itemize}
\end{thm*}
\begin{remark}\mbox{}\\
The Main theorem essentially states that $B_w$ and $A_w^1$ are isomorphic as Banach spaces in  the sense that 
\begin{enumerate}
\item $B_w$ and $A_w^1$ are both Banach spaces,
\item $f\in B_w$ if and only if  its analytic extension $F\in A_w^1$,
\item $F\in A_w^1$ if and only if  $\ds\lim_{\bm{r}\to 1}\mbox{Re}\; F(\bm{r}e^{i\bm{\xi}})\in B_w$,
\item $\norm{f}_{B_w}\equiv \norm{F}_{A_w^1}$\;.
\end{enumerate}
\end{remark}
\begin{remark}\mbox{}\\
\begin{enumerate}
\item In the first part of the Main Theorem, the weight function depends only on the argument $\bm{\xi}$ of $\bm{z}=\bm{r}e^{i\bm{\xi}}$, whereas in the second part, it depends only on the radius $\bm{r}$. The condition that $w\in \mathcal{B}_2$ in the first part is weaker than the condition $w\in \pD_1\cap \pB_2$ in the second part since per Lemma \ref{Lem1}, 
$  w\in \pD_1\cap \pB_2\subseteq \mathcal{D}=\bigcup\limits_{p>1}\mathcal{B}_p $
implies the existence of $p>1$ such that $w \in \mathcal{B}_p$. There is no guarantee that this $p$ will be 2 as in the first part. However, what the two parts have in common is the necessary condition that  if $B_w$ is contained in $A_w^1$, then  the weight $w\in \mathcal{D}$.
\item In the Main Theorem, the weight $w$ is a product weight, however general weights $w$ defined on $I^d$ are not addressed in this manuscript and it would be a worthwhile future endeavor to have a holistic understanding of the role of the weight $w$.
\end{enumerate}
\end{remark}
The proof of the Main Theorem relies on some  crucial lemmas  that will be stated below. The first lemma shows that partial derivatives of analytic extensions of special atoms in higher dimensions are bounded. Henceforth, the  constants $C$ will be generic and when necessary, their dependence on an interval $J$ will be specified accordingly.
\begin{lemma}\label{lemma1}
Let $J_j=[a_j-h_j,a_j+h_j], \ds J=\prod_{j=1}^d J_j$ and $\ds F(z)=\frac{1}{(2\pi)^d}\int_J P(\bm{z},\bm{\xi})b_w(\bm{\xi})d\bm{\xi}$. Then for any $j=1,\cdots, d$\;,  
\begin{itemize}
\item[(1)] there exists a constant $C(J)$ such that 
\begin{equation}\label{derivative}
f_j(z_j)=C(J)K_1(a_j,h_j,z_j)\prod_{\underset{l\neq j}{l=1}}^d K_2(a_l,h_l,z_l)\;,\end{equation}
where 
\begin{eqnarray*}
K_1(a_j,h_j,z_j)&=& \frac{1}{i}\left[\frac{1}{z_j-e^{i(a_j-h_j)}}+\frac{1}{z_j-e^{i(a_j+h_j)}} +\frac{2}{e^{ia_j}-z_j}\right]\;,\\
K_2(a_l,h_l,z_l)&=& \frac{2}{i}\left[ \ln\left(e^{i(a_l-h_l)}-z_l\right)+\ln\left(e^{i(a_l+h_l)}-z_l\right)-2\ln\left(e^{ia_l}-z_l\right)\right]\;.
\end{eqnarray*}
\item[(2)] Moreover for $i,j=1,\cdots, k$, there are absolute constants $C_1$ and $C_2$ such that 
\[\abs{K_1(a_j,h_j,z_j)}\leq C_1,\quad \abs{K_2(a_l,h_l,z_l)}\leq C_2\;.\]
\end{itemize}
\end{lemma}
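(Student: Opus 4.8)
The plan is to reduce everything to a one--dimensional computation by exploiting the fact that both the kernel $P$ and the atom $b_w$ factor across the coordinates. The kernel factors by definition, $P(\bm{z},\bm{\xi})=\prod_{j}P_j(z_j,\xi_j)$. The atom factors as well: the checkerboard choice of $R$ versus $L$ (visible in the displayed $d=2$ example, where $\chi_R-\chi_L=\sigma_1(\xi_1)\sigma_2(\xi_2)$) means that in general
\[ b_w(\bm{\xi})=\frac{1}{w(J)}\prod_{j=1}^d\sigma_j(\xi_j),\qquad \sigma_j(\xi_j)=\epsilon_j\big[\chi_{[a_j,a_j+h_j]}(\xi_j)-\chi_{[a_j-h_j,a_j]}(\xi_j)\big], \]
for signs $\epsilon_j\in\{\pm1\}$. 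Since the integrand is now a product of functions each depending on a single variable, Fubini gives $F(\bm{z})=\frac{1}{(2\pi)^d w(J)}\prod_{j=1}^d\Phi_j(z_j)$ with $\Phi_j(z_j)=\int_{J_j}P_j(z_j,\xi)\sigma_j(\xi)\,d\xi$. Because $\Phi_j$ depends only on $z_j$, differentiating in $z_j$ touches exactly one factor, which already yields the announced product form of $f_j$ once $K_1$ and $K_2$ are identified with $\Phi_j'$ and $\Phi_l$.

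For part~(1) it then remains to evaluate $\Phi_j$ explicitly. Writing $P_j=1+\frac{2z}{e^{i\xi}-z}$ and substituting $u=e^{i\xi}$ (so $d\xi=du/(iu)$) followed by the partial fraction $\frac{1}{u(u-z)}=\frac1z\big(\frac{1}{u-z}-\frac1u\big)$, one finds the antiderivative $G(\xi)=-\xi+\frac{2}{i}\ln(e^{i\xi}-z)$. Splitting $\sigma_j$ over the two halves of $J_j$ gives $\Phi_j(z)=\epsilon_j\big(G(a_j+h_j)+G(a_j-h_j)-2G(a_j)\big)$; the linear terms cancel, leaving exactly $\epsilon_j K_2(a_j,h_j,z)$. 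Differentiating this closed form in $z$ and using $\frac{d}{dz}\ln(e^{i\theta}-z)=\frac{1}{z-e^{i\theta}}$ gives $\Phi_j'(z)=2\epsilon_j K_1(a_j,h_j,z)$. Collecting the factors $\frac{1}{(2\pi)^d}$, $\frac{1}{w(J)}$, the $\epsilon_j$'s and the stray $2$ into a single constant $C(J)=\frac{2\prod_l\epsilon_l}{(2\pi)^d w(J)}$ produces \eqref{derivative}.

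For part~(2) the estimates rest on the elementary bound $\abs{e^{i\theta}-z}\geq 1-\abs{z}$, valid for every $\theta\in\R$ and every $z\in\mathbb{D}$. This bounds each of the three summands in $K_1$ by $(1-\abs{z})^{-1}$, hence $\abs{K_1}\leq C_1$; for $K_2$ I would first combine the three logarithms into $\ln\frac{(e^{i(a-h)}-z)(e^{i(a+h)}-z)}{(e^{ia}-z)^2}$ and then bound its real part through $\ln(1-\abs{z})\leq\ln\abs{e^{i\theta}-z}\leq\ln 2$ and its imaginary part by the principal argument, giving $\abs{K_2}\leq C_2$. The point to stress is that these constants are uniform over the atom parameters $a_j,h_j$, which is what the subsequent integration against the weight will require.

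The main obstacle is exactly this uniformity near the boundary. For fixed $h_j>0$ the kernels are genuinely unbounded as $z_j$ approaches one of the three arc points $e^{i(a_j\pm h_j)},e^{ia_j}$ (indeed $K_1$ has a simple pole and $K_2$ a logarithmic singularity there), so a bound that is simultaneously independent of $z_j$ and of the atom cannot hold on all of $\mathbb{D}$; the honest reading of part~(2) is that $C_1,C_2$ depend only on $\abs{z_j}$ (equivalently, the bounds are uniform in $(a_j,h_j)$ on each compact subset of the disk), with the boundary growth absorbed later by the weight conditions $w\in\mathcal{B}_2$ or $w\in\pD_1\cap\pB_2$. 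A secondary point requiring care is justifying the tensor--product form of $b_w$ for general $d$, i.e.\ that the prescribed parity choice of $R$ is the one that makes $\chi_R-\chi_L$ separate.
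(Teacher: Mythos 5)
Your part~(1) is correct and lands on exactly the paper's formula, by a cleaner route. The paper handles $d=2$ by brute force: it splits the integral over the four sub-rectangles $L_1,L_2,R_1,R_2$, computes the four double integrals $I_1,\dots,I_4$ with the same primitive you found ($-\xi+\frac{2}{i}\ln(e^{i\xi}-z)$ appears there through the quantities $M_2$ and $M_2'$), and then passes to general $d$ with the one-line remark that ``the integrand is made of functions with separable variables.'' You instead factor $b_w=w(J)^{-1}\prod_j\sigma_j(\xi_j)$ at the outset, so that $F(\bm{z})$ itself is a product $\prod_j\Phi_j(z_j)$ and $f_j$ falls out by differentiating a single factor; your identities $\Phi_j=\epsilon_jK_2$, $\Phi_j'=2\epsilon_jK_1$ and your constant $C(J)=2\prod_l\epsilon_l/((2\pi)^dw(J))$ match the paper's $C(J)=2/(w(J)(2\pi)^2)$ at $d=2$. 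Your closing caveat is precisely what the paper glosses over: the definition in Section~\ref{sect2} lets $R$ be an \emph{arbitrary} union of $2^{d-1}$ of the $2^d$ subcubes, and for a general such choice $\chi_R-\chi_L$ does not separate, so \eqref{derivative} holds only for the parity (checkerboard) choice used in the paper's $d=2$ example; the paper's ``we define $b_w(\bm{\xi})$ similarly'' silently assumes it.

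On part~(2) you are right to reject the literal statement, and your reading is sounder than the paper's own proof. Both $K_1$ and $K_2$ blow up at the three boundary points $e^{i(a\pm h)},e^{ia}\in\T$ (take $z=re^{ia}$, $r\to1^-$: then $\abs{K_1}\geq 2/(1-r)-O(1)\to\infty$), so no constant independent of $z$ can exist. The paper's argument has two lapses here: for $K_2$ it bounds $\abs{\ln Z_{1l}}=\sqrt{(\ln\abs{Z_{1l}})^2+\arg(Z_{1l})^2}$ using only $\ln\abs{Z_{1l}}\leq\ln 2$, ignoring that $\ln\abs{Z_{1l}}\to-\infty$ as $z_l$ approaches a corner point; and for $K_1$ it concludes with $C_1=\max\set{3\cdot 2^{n-1}/h_1,\,1/(3h_1)}$, where $n$ is the index of the annulus $\Phi_n^*$ containing $z_1$ and hence unbounded as $z_1\to e^{ia_1}$ (the constants also depend on $h_1$, so ``absolute'' fails even away from the boundary). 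Your $(1-\abs{z})$-dependent bounds are the correct pointwise statement, and, as you note, what the subsequent arguments actually need is integrability against the weight, which the paper establishes in Lemma~\ref{lemma3} using the same dyadic decomposition $\Phi_n^*$ --- that is, the annular splitting reappears where it genuinely belongs.
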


\begin{lemma} \label{lemma3} Let real numbers $a$ and $h>0$, and $z\in \mathbb{D}$. Let $J=[a-h,a+h]$.
\begin{itemize}
\item[(a)] If $w \in \mathcal{B}_2$ such that $w(re^{i\xi})\equiv w(\xi)$, then there exists a constant $C$ such that
\[\int\int_{\mathbb{D}}\abs{K_1(a,h,z)}w(\xi)d\xi dr\leq C(J)<\infty\;.\]
\item[(b)] If $w \in \pD_1\cap \pB_2$ such that $\ds w(re^{i\xi})\equiv \frac{w(1-r)}{1-r}$, then there exists a constant $C$ such that
\[\int\int_{\mathbb{D}}\abs{K_1(a,h,z)}\frac{w(1-r)}{1-r}d\xi dr\leq C(J)<\infty\;.\]
\end{itemize}
\end{lemma}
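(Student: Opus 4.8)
The plan is to estimate $\abs{K_1}$ in two complementary ways and then integrate against the relevant weight, splitting the disk according to the distance of $z=re^{i\xi}$ from the three boundary singularities of $K_1$. Writing $\psi(\theta)=\frac{1}{z-e^{i\theta}}$ and recalling from Lemma \ref{lemma1} that
\[
iK_1(a,h,z)=\psi(a-h)+\psi(a+h)-2\psi(a),
\]
two bounds are available: the crude triangle bound $\abs{K_1}\leq \sum_{\theta_0}\abs{z-e^{i\theta_0}}^{-1}$, where $\theta_0$ ranges over $\{a-h,a,a+h\}$, and the second-difference bound $\abs{K_1}\lesssim h^2\sup_{\abs{t}\le h}\abs{\psi''(a+t)}\lesssim h^2\sup_{\abs{\theta-a}\le h}\abs{z-e^{i\theta}}^{-3}$, valid when $z$ is bounded away from the arc $\{e^{i\theta}:\abs{\theta-a}\le h\}$. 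The identity $\abs{z-e^{i\theta}}^2=(1-r)^2+4r\sin^2\!\big(\tfrac{\xi-\theta}{2}\big)$ shows that near each singularity $\abs{z-e^{i\theta_0}}\approx\sqrt{(1-r)^2+(\xi-\theta_0)^2}$, and this is the estimate I feed into both integrals.

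For part (a) the weight depends only on $\xi$, so I integrate in $r$ first. Using $\int_0^1\frac{ds}{\sqrt{s^2+c^2}}=\sinh^{-1}(1/\abs{c})\lesssim \ln(C/\abs{c})$ with $c=\xi-\theta_0$, the $r$-integration of the crude bound produces a logarithmic kernel $\ln(C/\abs{\xi-\theta_0})$. I then split the $\xi$-integral over $3J$ and over $(3J)^c$. On $3J$ the issue is the local integrability of $w(\xi)\ln(C/\abs{\xi-\theta_0})$; here I invoke that $w\in\mathcal{B}_2\subseteq\mathcal{D}$ is doubling, decompose $3J$ dyadically around each $\theta_0$, and use reverse-doubling decay of the $w$-measure of the pieces to sum $\sum_k(k+1)\,w(\text{piece}_k)\lesssim w(J)$. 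On $(3J)^c$ I discard the crude bound in favour of the cancellation bound: integrating $h^2\big((1-r)^2+(\xi-a)^2\big)^{-3/2}$ in $r$ gains one power and yields $h^2\abs{\xi-a}^{-2}$, so that
\[
\int_{(3J)^c}\frac{h^2}{\abs{\xi-\xi_J}^2}\,w(\xi)\,d\xi\leq h^2\int_{J^c}\frac{w(\xi)}{\abs{\xi-\xi_J}^2}\,d\xi\leq C\,h^2\,\frac{w(J)}{\abs{J}^2}=C'\,w(J),
\]
the middle inequality being exactly the $\mathcal{B}_2$ condition with $\abs{J}=2h$.

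For part (b) the weight depends only on $r$, so I reverse the order and integrate in $\xi$ first. A purely geometric estimate — logarithmic from the near-pole part, and $O(1)$ from the cancellation part away from the poles — gives $\int_0^{2\pi}\abs{K_1}\,d\xi\lesssim \ln\!\big(C/(1-r)\big)$. With $s=1-r$ this reduces the claim to finiteness of $\int_0^1 \frac{w(s)}{s}\ln(C/s)\,ds$. Writing $\ln(C/s)=\ln C+\int_s^1\frac{dt}{t}$ and applying Fubini, the inner integral becomes $\int_0^t\frac{w(s)}{s}\,ds\leq C\,w(t)$ by the $\pD_1$ condition, after which the remaining $\int_0^1\frac{w(t)}{t}\,dt$ is finite, again by $\pD_1$; the hypothesis $w\in\pB_2$ enters only through Lemma \ref{Lem1}, guaranteeing that $w$ is doubling.

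The main obstacle — and the conceptual heart of both parts — is that the crude pole-by-pole bound is too lossy: integrated against the weight over the far region it diverges. Everything hinges on exploiting the second-difference cancellation to extract the factor $h^2=\tfrac14\abs{J}^2$, since it is exactly this factor that pairs with the exponent $2$ in the $\mathcal{B}_2$ (respectively $\pB_2$) condition to produce a bound of the correct homogeneity $C(J)\approx w(J)$. A secondary point requiring care is the local integrability of the logarithmic kernel against $w$ on $3J$ in part (a), which is not automatic for a general $L^1$ weight; I lean on the doubling — hence reverse-doubling — property of $w$ to obtain geometric decay of the dyadic contributions and sum them. Once the domain split and these two weight estimates are in place, the remaining computations are routine.
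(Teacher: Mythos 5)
Your proposal is correct, and in execution it takes a genuinely different route from the paper, even though both rest on the same two estimates for $K_1$: the crude pole-by-pole bound $\abs{K_1}\lesssim\abs{e^{ia}-z}^{-1}$ near the singularities and the second-difference cancellation bound $\abs{K_1}\lesssim h^2\abs{e^{ia}-z}^{-3}$ away from them. The paper works directly in the disk: it splits $\mathbb{D}$ into $D_1=\set{\abs{e^{ia}-z}>2h}$ and its complement, decomposes each piece into dyadic annuli $\Phi_n$ and $\Phi_n^*$ around $e^{ia}$, and sums the annular contributions using doubling (your ``reverse-doubling'' summation appears there too, implicitly, in the paper's step $\sum_{n}\int_{Q_{2^{1-n}h}(a)}w\leq C\int_{Q_{2h}(a)}w$); in part (b) this produces, after the substitution $u=1-r$, four separate sums $S_{11},S_{12},S_{21},S_{22}$ handled via $\pD_1$, $\pB_2$, monotonicity, and membership in some $\mathcal{B}_p$ supplied by Lemma \ref{Lem1}. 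You instead choose the order of integration to match the weight: in (a) integrating in $r$ first reduces everything to one-dimensional weighted integrals (a log kernel near the three poles, killed by doubling and dyadic decay, and the $\mathcal{B}_2$ condition applied verbatim on $(3J)^c$); in (b) integrating in $\xi$ first reduces the claim to $\int_0^1\frac{w(s)}{s}\ln(C/s)\,ds<\infty$, which Fubini and $\pD_1$ alone dispatch. Your part (b) is markedly more elementary than the paper's and in fact never uses $\pB_2$ or doubling, so your closing remark that $\pB_2$ ``enters through Lemma \ref{Lem1}'' is vacuous in your own argument: you have proved the sufficiency half of (b) under the weaker hypothesis $w\in\pD_1$.

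One quantitative difference deserves flagging. The paper's proof delivers bounds of the form $Cw(J)$ with $C$ absolute, and this is the homogeneity that matters downstream: in the Main Theorem the factor $C(J)=\frac{2}{w(J)(2\pi)^2}$ from Lemma \ref{lemma1} must cancel against the lemma's bound so that atoms are uniformly bounded in $A_w^1$. Your part (a) bound carries an extra $\ln(C/h)$, since on your $k$-th dyadic piece the log kernel is of size $k+\ln(C/h)$ rather than $k+1$; and your part (b) bound is $C\int_0^1 w(t)/t\,dt$, which is finite by $\pD_1$ but not obviously $\lesssim w(J)$. Since the lemma as stated asserts only $\leq C(J)<\infty$, your proof establishes it; but if the lemma is to feed the norm equivalence in the Main Theorem, the paper's sharper $Cw(J)$ form is the one that is actually used.
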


\begin{lemma} \label{lemma4} Let $1\leq j\leq d$ and  $J_j=[a_j-h_j,a_j+h_j]$ for  real numbers $a_j$ and $h_j>0$. Consider  $z_j=r_je^{i\xi_j}\in \mathbb{D}$ such that $h_j<\abs{e^{ia_j}-z_j}$.  Consider a product weight $w$ such that  $\frac{w_j(t)}{t^2}\in L^1(0,1)$ for all $1\leq j\leq d$. Then  there exists a constant  $C(J_j)>0$ such that 
\[ \frac{h_j^2}{w_j(J_j)}\int_{\xi_j \notin J} \frac{\omega(\xi_j)}{\xi_j^2}d\xi_j\leq C(J_j) \int\int_{\mathbb{D}}\abs{f_j(\bm{z})}d\xi_jdr_j.\]
 \end{lemma}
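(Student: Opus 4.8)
The plan is to reduce the multivariate estimate to a one–variable statement about $K_1$ and then extract a sharp pointwise lower bound for $\abs{K_1}$ which, after integration in the radial variable, reproduces the $\mathcal{B}_2$–integrand $w_j(\xi_j)/\xi_j^2$. First I would invoke Lemma \ref{lemma1} to write $f_j(\bm z)=C(J)\,K_1(a_j,h_j,z_j)\prod_{l\neq j}K_2(a_l,h_l,z_l)$. The factors $K_2(a_l,h_l,z_l)$ and the constant $C(J)$ are independent of the pair $(\xi_j,r_j)$, so they pull out of the integral $\iint_{\mathbb{D}}\abs{f_j(\bm z)}\,d\xi_j\,dr_j$; the $1/w(J)$ sitting inside $C(J)$ is what matches the $1/w_j(J_j)$ on the left. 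It then suffices to bound $\frac{h_j^2}{w_j(J_j)}\int_{\xi_j\notin J_j}\frac{w_j(\xi_j)}{\xi_j^2}\,d\xi_j$ by a constant times the radial integral of $\abs{K_1}$ weighted by $w_j$, everything else being absorbed into $C(J_j)$. From here I drop the index $j$ and, after a rotation taking the center $e^{ia}$ to the distinguished boundary point, reduce to $a=0$, so that $J=[-h,h]$ and $\abs{\xi-a}=\abs{\xi}$; this is exactly why the denominator $\xi^2$ appears rather than $(\xi-a)^2$.

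The heart of the argument is a pointwise lower bound for $K_1$. I would read $K_1(a,h,z)=\frac{1}{i}\big[g(a-h)+g(a+h)-2g(a)\big]$ as the symmetric second difference, with step $h$, of $g(\theta)=\big(z-e^{i\theta}\big)^{-1}$. A Taylor expansion produces the leading term $h^2 g''(a)$ together with a remainder of size $O\!\big(h^4\sup\abs{g^{(4)}}\big)$. Since $g''(\theta)=-e^{i\theta}(z-e^{i\theta})^{-2}-2e^{2i\theta}(z-e^{i\theta})^{-3}$, one checks that $\abs{g''(a)}\gtrsim\abs{z-e^{ia}}^{-3}$ whenever $\abs{z-e^{ia}}\le 1$, while $\abs{g^{(4)}}\lesssim\abs{z-e^{ia}}^{-5}$; hence the remainder is dominated by the leading term precisely when $h<\abs{z-e^{ia}}$, which is the standing hypothesis of the lemma. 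This gives $\abs{K_1(a,h,z)}\ge c\,h^2\,\abs{z-e^{ia}}^{-3}$ on the portion of the disk where $h<\abs{z-e^{ia}}\le 1$, which is all I need.

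Next I would integrate in the radial variable. Using $\abs{z-e^{ia}}^2=(1-r)^2+4r\sin^2\!\big(\tfrac{\xi-a}{2}\big)\asymp(1-r)^2+(\xi-a)^2$, for each fixed $\xi\notin J$ (so that $\abs{\xi-a}>h$ along the entire radius) one has $\int_0^1\big[(1-r)^2+(\xi-a)^2\big]^{-3/2}\,dr\asymp(\xi-a)^{-2}$, whence $\int_0^1\abs{K_1}\,dr\gtrsim h^2(\xi-a)^{-2}$. Multiplying by $w(\xi)$, integrating over $\xi\notin J$, and dividing by $w(J)$ yields the stated inequality, the remaining bounded factors and the fixed constants going into $C(J_j)$ by way of Lemma \ref{lemma1}.

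\emph{The main obstacle} is the uniformity of the pointwise lower bound on $\abs{K_1}$: a second difference can in principle collapse through cancellation, and the entire estimate rests on ruling this out. The hypothesis $h<\abs{e^{ia}-z}$ is exactly the quantitative input that controls the Taylor remainder and keeps $\abs{K_1}$ comparable to $h^2\abs{z-e^{ia}}^{-3}$; handling this together with the near–boundary behavior of the radial integral, and the elementary but delicate geometry $\abs{z-e^{ia}}\asymp\sqrt{(1-r)^2+(\xi-a)^2}$, is where the real work lies.
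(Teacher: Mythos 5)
Your skeleton is in fact the paper's own: factor $f_j$ via Lemma \ref{lemma1}, prove the pointwise lower bound $\abs{K_1(a,h,z)}\geq Ch^2\abs{e^{ia}-z}^{-3}$ on the region $h<\abs{e^{ia}-z}$, then integrate radially using $\abs{e^{ia}-z}^2\leq(1-r)^2+(\xi-a)^2$ so that $\int_0^1\abs{K_1}\,dr\gtrsim h^2(\xi-a)^{-2}$. (One small misreading: the $\xi_j^2$ in the conclusion comes in the paper from the crude inequality $(\xi_j-a_j)^{-2}\geq\xi_j^{-2}$ on $\set{0<h_j<\xi_j}$, not from a rotation to $a=0$; a rotation is not free here, since it also translates the fixed weight $w_j$.) But two of your steps have genuine gaps. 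First, the factors $K_2(a_l,h_l,z_l)$, $l\neq j$, do not simply ``pull out into $C(J_j)$'': after factoring, $\iint_{\mathbb{D}}\abs{f_j}\,d\xi_j\,dr_j=C(J)\prod_{l\neq j}\abs{K_2(a_l,h_l,z_l)}\iint_{\mathbb{D}}\abs{K_1}\,d\xi_j\,dr_j$, and since this product multiplies the side of the inequality you want to be \emph{large}, you need it bounded \emph{below} away from zero, uniformly in the fixed $z_l$. The bound you invoke from Lemma \ref{lemma1}(2) is the upper bound $\abs{K_2}\leq C_2$, which goes the wrong way; $K_2$ is itself a second difference and can be arbitrarily small. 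The paper devotes a separate step to exactly this point, proving $\abs{K_2(a_l,h_l,z_l)}\geq 4\ln(h_l)>0$ under the extra normalization $1<h_l<\abs{e^{ia_l}-z_l}$ (equation \eqref{eqnK2}); some such positive lower bound, with its hypotheses stated, is indispensable to your reduction.

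Second, your Taylor argument for the lower bound on $\abs{K_1}$ does not close under the stated hypothesis. Reading $K_1$ as the second difference of $g(\theta)=(z-e^{i\theta})^{-1}$ gives a remainder of size $O\bigl(h^4\sup_{\abs{\theta-a}\leq h}\abs{g^{(4)}(\theta)}\bigr)\lesssim h^4(\delta-h)^{-5}$ with $\delta=\abs{z-e^{ia}}$, because on $[a-h,a+h]$ one only has $\abs{z-e^{i\theta}}\geq\delta-h$. Domination by the main term $h^2\delta^{-3}$ then requires roughly $2h^2\delta^3<(\delta-h)^5$, which fails as $h\uparrow\delta$ (already at $h=\delta/2$ it is false); so your pointwise bound is available only on $\set{h\leq c_0\abs{e^{ia}-z}}$ for a small absolute $c_0$, not ``precisely when $h<\abs{e^{ia}-z}$'' as you claim. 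Downstream this loses the annulus $h<\abs{\xi-a}\leq h/c_0$, and the lemma's sole hypothesis on the weight ($w_j(t)/t^2\in L^1$) provides no doubling-type control to absorb that loss back into $\int_{\xi\notin J}$. The paper sidesteps Taylor expansion entirely by summing the second difference in closed form,
\[iK_1(a,h,z)=\frac{2e^{ia}(z+e^{ia})(1-\cos h)}{(e^{ia}-z)\left[(e^{ia}-z)^2+2e^{ia}z(1-\cos h)\right]}\;,\]
and then the elementary bounds $1-\frac{h^2}{2}\leq\cos h\leq 1-\frac{h^2}{2}+\frac{h^4}{24}$ together with $\abs{(e^{ia}-z)^2+2e^{ia}z(1-\cos h)}\leq\abs{e^{ia}-z}^2+h^2\leq 2\abs{e^{ia}-z}^2$ yield \eqref{eqnK1} uniformly on the whole region $h<\abs{e^{ia}-z}$. (Your restriction $\abs{e^{ia}-z}\leq 1$ is actually a good idea to keep, since it gives $\abs{z+e^{ia}}\geq 1$ and so repairs the numerator estimate near the antipodal point.) Replace your expansion by this exact identity and the rest of your radial integration, which is correct, goes through.
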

 \begin{lemma}\label{lemma5}
 Let   $j=[a-h,a+h]$ for  real numbers $a$ and $h>0$ and $w(t)/t$ and in $L^1(J)$. Fix $1\leq j\leq d$.
 \begin{itemize}
 \item[(a)] Consider  
 $D_{1}=\set{z=re^{i\xi}\in \mathbb{D}: h<\abs{e^{ia}-z}}$. There exists an absolute constant $C$ such that 
 \[\int\int_{D_{1}}\abs{f_j(\bm{z})} \frac{w(1-r)}{1-r}d\xi dr \geq C_j \int_{h}^1 \frac{w(u)}{u}du\]
 
 \item[(b)] Consider the subset $D_{2}=\set{z\in  \mathbb{D}: \abs{e^{ia}-z}\leq \frac{h}{4}}.$ There exists an absolute constant $C_j$ such that \[ \int\int_{D_{2}}\abs{f_j(\bm{z})} \frac{w_j(1-r)}{1-r}d\xi dr\geq C_j \int_0^{\frac{h}{4\sqrt{2}}} \frac{w(u)}{u} du\;.\]
 \end{itemize}
 \end{lemma}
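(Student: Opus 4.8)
The plan is to reduce both statements to a single lower bound for the kernel $K_1$ and then run one common geometric argument, the two parts differing only in the pointwise estimate used for $K_1$. First I would invoke Lemma~\ref{lemma1}(1) to write $f_j(\bm z)=C(J)\,K_1(a,h,z_j)\prod_{l\neq j}K_2(a_l,h_l,z_l)$. Since the integration in Lemma~\ref{lemma5} runs only over $(\xi,r)=(\xi_j,r_j)$, the factor $\prod_{l\neq j}K_2(a_l,h_l,z_l)$ is constant relative to the integral, and absorbing $\abs{C(J)\prod_{l\neq j}K_2}$ into $C_j$ reduces everything to lower bounds for $\int\int_{D}\abs{K_1(a,h,z)}\,\frac{w(1-r)}{1-r}\,d\xi\,dr$. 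It is useful to record that $K_1=\frac1i\big[\Psi(a-h)+\Psi(a+h)-2\Psi(a)\big]$ with $\Psi(s)=\frac{1}{z-e^{is}}$, so $K_1$ is, up to the factor $\frac1i$, the symmetric second difference of $\Psi$ in $s$ with step $h$. Near $e^{ia}$ I would pass to the local coordinates $u=1-r$ (depth) and $s=\xi-a$ (arc), so that $\abs{e^{ia}-z}\asymp\sqrt{u^2+s^2}$ and $d\xi\,dr=ds\,du$. In both parts I then restrict the region to a cone $\set{\abs{\xi-a}\le 1-r}$ about the radial ray through $e^{ia}$, on which $\abs{e^{ia}-z}\asymp 1-r$; this is precisely what converts the weight $\frac{w(1-r)}{1-r}$ into $\frac{w(u)}{u}$ after the substitution $u=1-r$.

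For part (b) the point $z$ lies in $D_2=\set{\abs{e^{ia}-z}\le h/4}$, so the middle term of $K_1$ dominates. Since $\abs{z-e^{i(a\pm h)}}\ge\abs{e^{ia}-e^{i(a\pm h)}}-\abs{e^{ia}-z}\ge 2\sin(h/2)-h/4$, the two outer terms are $O(1/h)$ while $\frac{2}{\abs{e^{ia}-z}}\ge 8/h$; a direct comparison then yields the clean pointwise bound $\abs{K_1}\ge\frac{c}{\abs{e^{ia}-z}}$ throughout $D_2$. Restricting to the sub-cone $\set{\,1-r\le\tfrac{h}{4\sqrt2},\ \abs{\xi-a}\le 1-r\,}\subseteq D_2$, where $\abs{e^{ia}-z}\le\sqrt2\,(1-r)$, I bound $\abs{K_1}\ge\frac{c}{\sqrt2\,(1-r)}$, integrate the inner $\xi$-window of length $2(1-r)$, and substitute $u=1-r$; the $(1-r)$ factors cancel and the surviving integral is exactly $C_j\int_0^{h/(4\sqrt2)}\frac{w(u)}{u}\,du$, as claimed.

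For part (a) the point $z\in D_1$ is separated from $e^{ia}$ by at least $h$, so no single term of $K_1$ dominates and the decisive quantity is the \emph{surviving} size of the second difference. Here I would again restrict to a cone about the radial ray, now with $1-r$ ranging over $(h,1)$ (which indeed lies in $D_1$, since $\abs{e^{ia}-z}\ge 1-r>h$), Taylor-expand the second difference to isolate the leading term $K_1\sim h^2\Psi''(a)$, and establish the matching \emph{lower} bound for $\abs{K_1}$ on this cone. Since $\Psi''(a)$ does not vanish along the central ray, a cone about it carries a genuine lower estimate; the change of variables $u=1-r$ together with integration of the angular window then produces the one-dimensional integral appearing on the right-hand side of (a). The hypothesis $w(t)/t\in L^1$ guarantees the convergence of all the resulting integrals.

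The main obstacle is exactly this pointwise lower bound on $\abs{K_1}$ in part (a): because $K_1$ is a symmetric second difference of $\Psi$, its three terms nearly cancel once $z$ is at distance $\gg h$ from $e^{ia}$, so a crude triangle inequality delivers only an upper estimate. The careful step is to extract the leading non-cancelling contribution $h^2\Psi''(a)$, to verify it does not itself vanish on the chosen cone, and thereby to secure a lower bound robust enough to be integrated against the weight $\frac{w(1-r)}{1-r}$. By contrast, part (b) is routine once the dominance of the singular term $\frac{2}{e^{ia}-z}$ on $D_2$ has been recorded, and the cone-plus-substitution mechanism is identical in both cases.
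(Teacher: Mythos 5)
Your overall skeleton is the paper's: factor $f_j$ through Lemma \ref{lemma1}, reduce to pointwise lower bounds for $K_1$, restrict to a cone where $\abs{e^{ia}-z}\asymp 1-r$, integrate out the angular window of length $\asymp 1-r$, and substitute $u=1-r$. Your part (b) is essentially the paper's proof verbatim in structure: your sub-cone $\set{\abs{\xi-a}\le 1-r\le h/(4\sqrt2)}$ is exactly the paper's $D_{2j}^*$, and your pointwise bound $\abs{K_1}\ge c/\abs{e^{ia}-z}$ on $D_2$ (middle term dominates, outer terms $O(1/h)$) is correct --- indeed it is better justified than the paper's reuse of \eqref{lowerbdfj} inside $D_2$, since \eqref{lowerbdfj} was derived under the \emph{opposite} hypothesis $h<\abs{e^{ia}-z}$. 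But part (a) has a genuine gap as written. You propose to get the lower bound $\abs{K_1}\gtrsim h^2\abs{\Psi''(a)}\asymp h^2/\abs{e^{ia}-z}^3$ by Taylor-expanding the symmetric second difference. The remainder in that expansion is of size $\asymp h^3\sup\abs{\Psi'''}\asymp (h/\abs{e^{ia}-z})\cdot h^2/\abs{e^{ia}-z}^3$ (and even with the symmetric cancellation only $(h/\abs{e^{ia}-z})^2$ times the leading term), while on $D_1$ you only know $h/\abs{e^{ia}-z}<1$. So near the inner edge of your cone, where $1-r$ is just above $h$ and $\abs{e^{ia}-z}\asymp h$, the remainder is \emph{not} dominated by the leading term and no lower bound follows; restricting to $1-r>Mh$ for large $M$ would salvage it but changes the range of the final integral. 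The paper avoids this entirely by using the exact rational identity
\[
iK_1(a,h,z)=\frac{2e^{ia}(z+e^{ia})(1-\cos h)}{(e^{ia}-z)\left((e^{ia}-z)^2+2e^{ia}z(1-\cos h)\right)}\;,
\]
from which \eqref{eqnK1} gives $\abs{K_1}\ge Ch^2/\abs{e^{ia}-z}^3$ directly whenever $h<\abs{e^{ia}-z}$ (and on your radial cone $\abs{e^{ia}-z}\le\sqrt2(1-r)\le\sqrt2$, so the numerator factor $\abs{z+e^{ia}}\ge 2-\sqrt2$ is harmless). You should replace the Taylor step by this identity.

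Two smaller points. First, absorbing $\abs{C(J)\prod_{l\neq j}K_2(a_l,h_l,z_l)}$ into $C_j$ is only legitimate if that product is bounded \emph{below}; the paper arranges this by fixing the off-variables with $1<h_l<\abs{e^{ia_l}-z_l}$, which yields $\abs{K_2(a_l,h_l,z_l)}\ge 4\ln(h_l)>0$ as in \eqref{eqnK2} --- without such a condition your reduction could be vacuous, since $K_2$ can vanish. Second, your choice of cone in (a) (the radial cone $\abs{\xi-a}\le 1-r$) differs from the paper's region $D_{1j}^*=\set{h\le 1-r<\abs{\xi-a}}$, but it is equally valid: both give $\abs{e^{ia}-z}$ comparable to the surviving variable and both land on $\asymp h^2\int_h^1 w(u)/u^3\,du$, which dominates the stated right-hand side $\int_h^1 w(u)/u\,du$ since $u\le1$; this $u^{-3}$ form is in fact what the Main Theorem uses to conclude $w\in\pB_2$.
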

 

\begin{proof}[{\bf Proof of the Main Theorem}]\mbox{}\\
Part (a):  Let $w$ be a product weight such that $w\in \mathcal{B}_2$. To show that $B_w\subseteq A_w^1$, it will be enough to show that  analytic extensions of   weighted special atoms $b_w(\bm{\xi})$ are contained in $A_w^1$, that is, we will show that for a special atom $b_w(\bm{\xi})\in B_w$, we have $F\in A_w^1$  where \[ F(\bm{z})=\frac{1}{(2\pi)^d}\int_I P(\bm{z},\bm{\xi})b_w(\bm{\xi})d\bm{\xi}.\]

\noindent Fix $1\leq j\leq d$. Let $d\bm{\xi}^{-j}d\bm{r}^{-j}=d\xi_1dr_1\cdots d\xi_{j-1}dr_{j-1}d\xi_{j+1}dr_{j+1}\cdots d\xi_ddr_d$.
\noindent From Lemma \ref{lemma1} above, we have 
\begin{eqnarray*}
\abs{f_j(\bm{z})}&=&C(J)\abs{K_1(a_j,h_j,z_j)}\prod_{\underset{l\neq j}{l=1}}^d \abs{K_2(a_l,h_l,z_l)}\\
&\leq & M(J)\abs{K_1(a_j,h_j,z_j)}\quad\mbox{where}
\quad M(J)=C(J)C_2^{d-1}\;.
\end{eqnarray*}
Also from  Lemma \ref{lemma1} above, $K_1(a_j,h_j,z_j)$ is bounded, for all $1\leq j\leq d$.
Therefore  $\ds \sup_{1\leq j\leq d}\abs{K_1(a_j,h_j,z_j)} $ exists. Moreover using  the definition of $\abs{F'(\bm{z})}$ in  equation \eqref{eqn:Fderivative}, we obtain
\begin{eqnarray}\label{ineq:Fprime}
\abs{F'(\bm{z})}&\leq &d^{1/2}M(J)\sup_{1\leq j\leq d}\abs{K_1(a_j,h_j,z_j)}\;. 
\end{eqnarray}
We then have that 
\begin{equation*}
\begin{aligned}
\int\int_{\mathbb{D}^d}\abs{F'(\bm{z})}w(\bm{\xi})d\bm{\xi}d\bm{r}={} &\int\int_{\mathbb{D}^d}\abs{F'(\bm{z})}\left(\prod_{j=1}^d w_j(\xi_j)\right)d\bm{\xi}d\bm{r}\\
& \leq  d^{1/2}M(J)\left(\sup_{1\leq j\leq d} \int\int_{\mathbb{D}} \abs{K_1(a_j,h_j,z_j)}w_j(\xi_j)d\xi_j dr_j\right)\\
& \times\left( \int\int_{\mathbb{D}^{d-1}} \left(\prod_{\underset{l\neq j}{l=1}}^d w_l(\xi_l)\right)d\bm{\xi}^{-j}d\bm{r}^{-j}\right)\\
& \leq C\sup_{1\leq j\leq d}\left( \int\int_{\mathbb{D}} \abs{K_1(a_j,h_j,z_j)}w_j(\xi_j)d\xi_j dr_j\right)<\infty\;.
\end{aligned}
\end{equation*}
This proves that $F\in A_w^1$.  \\
\noindent Conversely, suppose that $F\in A_w^1$. We will show that in this case, $w\in \mathcal{B}_2$. As above, it suffices to consider analytic extensions $F$ of weighted special atoms.  Let $C>0$ such that $\norm{F}_{A_w^1}<C$. 

\noindent Fix $1\leq j\leq d$. Then we know that $\norm{F'(\bm{z})}_2\geq \norm{F'(\bm{z})}_{\infty}\geq \abs{f_j(\bm{z})}$. Therefore
\begin{equation*}
\begin{aligned}
C>\int\int_{\mathbb{D}^d}\abs{F'(\bm{z})} w(\bm{\xi})d\bm{\xi}d\bm{r}&=  \int\int_{\mathbb{D}^d}\left(\sum_{l=1}^d \abs{f_l(\bm{z})}^2\right)^{1/2}w(\bm{\xi})d\bm{\xi}d\bm{r}\\
&\geq \left( \int\int_{\mathbb{D}^{d-1}} \left(\prod_{\underset{l\neq j}{l=1}}^d w_l(\xi_l)\right)d\bm{\xi}^{-j}d\bm{r}^{-j}\right)\\
&\times \left( \int\int_{\mathbb{D}}\abs{f_j(\bm{z})} w_j(\xi_j)d\xi_jdr_j\right)\\
&\geq\left(\prod_{\underset{l\neq j}{l=1}}^d \norm{w_l}_{L^1}\right)\times\left( \int_{\xi_j\notin J_j} \frac{w_j(\xi_j)}{\xi_j^2}d\xi_j\right), \quad \mbox{by  Lemma \ref{lemma4}}
\end{aligned}
\end{equation*}
Since $j$ is arbitrary, it follows that 
\[\frac{\abs{J_j}^p}{w_j(J_j)}\int\int_{\xi_j\notin J_j} \frac{w_j(\xi_j)}{\xi_j^2}d\xi_jdr_j<C, \quad \forall j=1, \cdots, d.\]
Therefore $w\in \mathcal{B}_2$ and this concludes the proof of part (a) of the theorem.\\
\noindent Part  (b). Now assume $w$ is a product weight such that  $w\in \pD_1\cap \pB_2$. Let $F \in B_w$. As above, we will proceed by showing that analytic extensions $F$ of special atoms are in $A_w^1$. Thus consider $\ds F(z)=\frac{1}{(2\pi)^d}\int_J P(\bm{z},\bm{\xi})b_w(\bm{\xi})d\bm{\xi}$. We know from above  equation \eqref{ineq:Fprime} that
\begin{equation*}
\begin{aligned}
\int\int_{\mathbb{D}^d}\abs{F'(\bm{z})}\frac{w(1-\bm{r})}{1-\bm{r}}d\bm{\xi}d\bm{r}
& \leq  d^{1/2}M(J)\left(\sup_{1\leq j\leq d} \int\int_{\mathbb{D}} \abs{K_1(a_j,h_j,z_j)}\frac{w_j(1-r_j)}{1-r_j}d\xi_j dr_j\right)\\
& \times\left( \int\int_{\mathbb{D}^{d-1}} \left(\prod_{\underset{l\neq j}{l=1}}^d \frac{w_l(1-r_l)}{1-r_l}\right)d\bm{\xi}^{-j}d\bm{r}^{-j}\right)
\end{aligned}
\end{equation*}
By Lemma \ref{lemma3}, we have
\[\sup_{1\leq j\leq d} \int\int_{\mathbb{D}} \abs{K_1(a_j,h_j,z_j)}\frac{w_j(1-r_j)}{1-r_j}d\xi_j dr_j\leq \sup_{1\leq j\leq d} C(J_j)<\infty\;.\]
By the Dini condition, we have that
\[
C_0= \int\int_{\mathbb{D}^{d-1}} \left(\prod_{\underset{l\neq j}{l=1}}^d \frac{w_l(1-r_l)}{1-r_l}\right)d\bm{\xi}^{-j}d\bm{r}^{-j}= (2\pi)^{d-1} \prod_{\underset{l\neq j}{l=1}}^d \left(\int_0^{1} \frac{w_l(u_l)}{u_l}du_l\right)<\infty\;.\]
Therefore, we can infer that $F\in A_w^1$.\\
\noindent Conversely, suppose that $F\in A_w^1$. We will show that $w\in \pD_1\cap \pB_2$. Let $C>0$ such that $\norm{F}_{A_w^1}\leq C$.  Fix $1\leq j\leq d$. Then as above,
\begin{equation}\label{eqnlowedbd}
\begin{aligned}
C>\int\int_{\mathbb{D}^d}\abs{F'(\bm{z})} \frac{w(1-\bm{r})}{1-\bm{r}}d\bm{\xi}d\bm{r}
&\geq \left( \int\int_{\mathbb{D}^{d-1}} \left(\prod_{\underset{l\neq j}{l=1}}^d \frac{w_l(1-r_l)}{1-r_l}\right)d\bm{\xi}^{-j}d\bm{r}^{-j}\right)\\
&\times \left( \int\int_{\mathbb{D}}\abs{f_j(\bm{z})} \frac{w_j(1-r_j)}{1-r_j}d\xi_jdr_j\right)\\
&= C_0\times \int\int_{\mathbb{D}}\abs{f_j(\bm{z})} \frac{w_j(1-r_j)}{1-r_j}d\xi_jdr_j\;.\\
\end{aligned}
\end{equation}
We can first combine the latter with equation  \eqref{eqnlowedbd} and  Lemma \ref{lemma5} (a) to  obtain that $\ds \int_{h_j}^{1} \frac{w_j(u_j)}{u_j^3}dr_j\leq C_j$, that is, $w_j\in \pB_2$. Since $j$ is arbitrary, it follows that $w\in \pB_2$\;.
 We can also combine the latter  with  \eqref{eqnlowedbd},  Lemma \ref{Lem1},  and Lemma \ref{lemma5} (b) to conclude that $w_j\in \pD_1$. Since $j$ is arbitrary, it follows that $w\in \pD_1$\;.\\
It remains to show that $B_w$ and $A_w^1$ are norm-equivalent. Since $B_w\subseteq A_w^1$, there exists a constant  $M>0$ such that $\norm{f}_{A_w^1}\leq M\norm{f}_{B_w}$. To obtain the reverse inequality, it suffices to use a simple extension of the one dimension case to obtain that  the dual $B_w^*$ of $B_w$ is  continuously contained in the dual  $A_w^{1*}$ of $A_w^1$. Hence by virtue of  the inclusion $B_w\subseteq A_w^1$, we have $A_w^{1*}\subseteq B_w^*$, so that $A_w^{1*}= B_w^*$. We then  have the following situation:\\
\noindent (a): $B_w\subseteq A_w^1$ implies that the inclusion map $G: B_w\to A_w^1$ is an open map.\\
\noindent (b): $\norm{f}_{A_w^1}\leq M\norm{f}_{B_w}$ implies that $G$ is a bounded linear map.

Thus by the Open Mapping Theorem, the range of $G(B_w)=B_w$ is dense in $A_w^1$. \\
\noindent (c) Since $A_w^{1*}= B_w^*$, it follows that $B_w$ and $A_w^1$ are norm-equivalent, see for example \cite{Folland1999} page 160. 


\end{proof}
 \begin{proof}[{\bf Proof of Theorem  \ref{Theorem2.2}}] \label{prooftheorem2.2}
 In the proof that $\norm{\cdot}_{B_w}$ is a norm, only the triangle inequality requires special care. Using the definition of the infimum, let $\epsilon>0$ and let $\set{\alpha_n}_{n\in \N},  \set{\beta_n}_{n\in \N}$ such that 
 $\ds f(\xi)=\sum_{n\in \N} \alpha_n b_{w,n}(\xi)$ and  $\ds g(\xi)=\sum_{n\in \N} \beta_n b_{w,n}(\xi)$ and $\ds \sum_{n \in \N}\abs{\alpha_n}<\norm{f}_{B_w}+\epsilon/2, ~\sum_{n \in \N}\abs{\beta_n}<\norm{g}_{B_w}+\epsilon/2$.  
Hence  $ \ds (f+g)(\xi)=\sum_{n\in \N}(\alpha_n+\beta_n) b_{w,n}(\xi)$ with $\ds \sum_{n\in \N}\abs{\alpha_n+\beta_n}\leq \sum_{n\in \N}\abs{\alpha_n}+\sum_{n\in \N}\abs{\beta_n}<\infty\;.$ Therefore,
 \[\ds \norm{f+g}_{B_w}\leq \sum_{n\in \N}\abs{\alpha_n+\beta_n}\leq \sum_{n \in \N}\abs{\alpha_n}+\sum_{n \in \N}\abs{\beta_n}<\norm{f}_{B_w}+\norm{g}_{B_w}+\epsilon\;. \]
 Since $\epsilon$ is arbitrary, it follows that $\norm{f+g}_{B_w}\leq \norm{f}_{B_w}+\norm{g}_{B_w}$.\\
 Now, let us prove that $B_w$ is a Banach space. It will be sufficient to show that every absolutely convergent sequence is convergent. In short, it will be enough to show that given a sequence $\set{f_n}_{n\in \N}$, we have $\ds \norm{\sum_{n\in \N} f_n}_{B_w}\leq \sum_{n\in \N}\norm{f_n}_{B_w}$.\\
 Let $\epsilon>0$. Given $n\in \N$, there is a  sequence $\alpha_{n_k}$ of real numbers such that  $\ds f_n(\xi)=\sum_{k\in \N}\alpha_{n_k}b_{w,n_k}(\xi)$ with $\ds \sum_{k\in \N}\abs{\alpha_{n_k}}<\norm{f_n}_{B_w}+\frac{\epsilon}{2^n}$\;.
 Therefore
 \[\sum_{n\in \N}\sum_{k\in \N}\abs{\alpha_{n_k}}<\sum_{n\in \N}\norm{f_n}_{B_w}+\sum_{n\in \N}\frac{\epsilon}{2^n}=\sum_{n\in \N}\norm{f_n}_{B_w}+\epsilon\;.\]
 Since $\epsilon$ is arbitrary, it follows that 
 \[\norm{\sum_{n\in \N}f_n}\leq \sum_{n\in \N}\norm{f_n}_{B_w}\;.\]
 \end{proof}

\begin{proof}[{\bf Proof of Lemma \ref{lemma1}}]
Let $\ds P_j'(z_j,\xi_j)=\frac{\partial P_j(z_j,\xi_j)}{\partial z_j}=\frac{e^{i\xi_j}}{(e^{i\xi_j}-z_j)^2}$, for $j=1,2,\ldots,d$.\\
Let us start with $d=2$. Let $J=[a_1-h_1,a_1+h_1]\times[a_2-h_2,a_2+h_2]$.\\
Then 
\[  F(z_1,z_2)=\frac{1}{(2\pi)^2}\int_J P_1(z_1,\xi_1)P_2(z_2,\xi_2)b_w(\xi_1,\xi_2)d\xi_1d\xi_2,\]
and 
\begin{eqnarray*}
f_1(z_1,z_2)&=&\frac{2}{(2\pi)^2}\int_J P_1'(z_1,\xi_1)P_2(z_2,\xi_2)b_w(\xi_1,\xi_2)d\xi_1d\xi_2\\
&=&\frac{2}{w(J)(2\pi)^2}[I_1+I_2-I_2-I_4]\;,
\end{eqnarray*}
where 
\begin{eqnarray*}
I_1&=& \int_{a_1-h_1}^{a_1}\int_{a_2}^{a_2+h_2} P_1'(z_1,\xi_1)P_2(z_2,\xi_2)d\xi_1d\xi_2, \quad I_2= \int_{a_1}^{a_1+h_1}\int_{a_2-h_2}^{a_2} P_1'(z_1,\xi_1)P_2(z_2,\xi_2)d\xi_1d\xi_2\\
I_3&=& \int_{a_1-h_1}^{a_1}\int_{a_2-h_2}^{a_2} P_1'(z_1,\xi_1)P_2(z_2,\xi_2)d\xi_1d\xi_2, \quad I_4= \int_{a_1}^{a_1+h_1}\int_{a_2}^{a_2+h_2} P_1'(z_1,\xi_1)P_2(z_2,\xi_2)d\xi_1d\xi_2\;.
\end{eqnarray*}
Therefore
\begin{eqnarray*}
I_1-I_4&=& \int_{a_1-h_1}^{a_1}\int_{a_2}^{a_2+h_2} P_1'(z_1,\xi_1)P_2(z_2,\xi_2)d\xi_1d\xi_2- \int_{a_1}^{a_1+h_1}\int_{a_2}^{a_2+h_2} P_1'(z_1,\xi_1)P_2(z_2,\xi_2)d\xi_1d\xi_2\\
&=& -K_1(a_1,h_1,z_1)M_2(a_2,h_2,z_2)\;,
\end{eqnarray*}
where \begin{eqnarray*}
K_1(a_1,h_1,z_1)&=-&\int_{a_1-h_1}^{a_1} \frac{e^{i\xi_1}}{(e^{i\xi_1}-z_1)^2}d\xi_1+\int_{a_1}^{a_1+h_1}\frac{e^{i\xi_1}}{(e^{i\xi_1}-z_1)^2}d\xi_1\\
&=& \frac{1}{i}\left[ \frac{1}{z_1-e^{i(a_1-h_1)}}+\frac{1}{z_1-e^{i(a_1+h_1)}}+\frac{2}{e^{ia_1}-z_1}\right]\;.
\end{eqnarray*}
Also,
\begin{eqnarray*}
M_2(a_2,h_2,z_2)&=&\int_{a_2}^{a_2+h_2} P_2(z_2,\xi_2)d\xi_2\\
&=& \frac{1}{i}\left[ -ih_2+2\ln(e^{ia_2}-z_2)-2\ln(e^{i(a_2-h_2)}-z_2)\right]\;.
\end{eqnarray*}
Likewise, we have 
$I_1-I_3=K_1(a_1,h_1,z_1)M_2'(a_2,h_2,z_2)$ with \[M_2'(a_2,h_2,z_2)=\frac{1}{i}\left[ -ih_2-2\ln(e^{ia_2}-z_2)+2\ln(e^{i(a_2-h_2)}-z_2)\right]\;.\]
It follows that 
\[I_1-I_4+I_2-I_3=K_1(a_1,h_1,z_1)\left[M_2'(a_2,h_2,z_2)-M_2(a_2,h_2,z_2)\right]=K_1(a_1,h_1,z_1)K_2a_2,h_2,z_2),\] 
where 
\[K_2(a_2,h_2,z_2)=\frac{2}{i}\left[\ln(e^{i(a_2-h_2)}-z_2)+\ln(e^{i(a_2+h_2)}-z_2)-2\ln(e^{ia_2}-z_2)\right]\;.\]
Hence
\[\ds f_1(z_1,z_2)=C(J)K_1(a_1,h_1,z_1)K_2(a_2,h_2,z_2),\]
 where $\ds C(J)=\frac{2}{w(J)(2\pi)^2}$. 
Similarly, we obtain
\[\ds f_2(z_1,z_2)=C(J)K_1(a_2,h_2,z_2)K_2(a_1,h_1,z_1)\;.\]
For $d\geq 2$, we observe that the constant $C(J)$ will remain the same, regardless of the variable of  differentiation. Moreover, the function $K_1$ takes as arguments  $a_j, h_j,  z_j$ if we are differentiating with respect to $z_j$  and $K_2$ takes as arguments $a_l, h_l, h_l$, for all  $l\neq j$. The product comes from the fact that the integrand is made of functions with separable variables. Hence, we conclude that 
\[f_j(\bm{z})=C(J)K_1(a_j,h_j,z_j)\prod_{\underset{l\neq k}{l=1}}^d K_2(a_l,h_l,z_l)\;.\]
Moreover,  \[ \abs{K_2(a_l,h_l,z_l)}\leq 2\left[ \abs{\ln(e^{i(a_l-h_l)}-z_l)}+\abs{\ln(e^{i(a_l+h_l)}-z_l)}+2\abs{\ln(e^{i(a_l)}-z_l)}\right]\;.\]
Put $Z_{1l}=e^{i(a_l-h_l)}-z_l$. We know that $|z_l|<1$,  thus $\ln(\abs{Z_{il}})\leq \ln(2)$.
  \[\abs{\ln(Z_{1l})}=\sqrt{(\ln(\abs{Z_{il}}))^2+\mbox{arg}(Z_{1l})^2}.\]
  \noindent Consequently, 
  $\abs{\ln(Z_{1l})}\leq \sqrt{(\ln(2))^2+\frac{\pi^2}{4}}$. Applying a similar argument to $\abs{\ln(e^{i(a_l-h_l)}-z_l)}$ and $\abs{\ln(e^{i(a_l-h_l)}-z_l)}$, we obtain that 
  \[\abs{K_2(a_l,h_l,z_l)}\leq 3\sqrt{4(\ln(2))^2+\pi^2}\;.\]
  We will use a similar argument to \cite{DeSouza1989} 
  to deal with $K_1(a_1,h_1,z_1)$. However, this  approach is much general than theirs in that they assumed that  $a_1=0$ which is not assumed here.  We  observe  that 
  \begin{eqnarray*}
  iK_1(a_1,h_1,z_1)&=& \frac{1}{z_1-e^{i(a_1-h_1)}}+\frac{1}{z_1-e^{i(a_1+h_1)}}+\frac{1}{e^{ia_1}-z_1}\\
  &=& \frac{2e^{ia_1}(z_1+e^{ia_1})(1-\cos h_1)}{(z_1-e^{i(a_1-h_1)})(z_1-e^{i(a_1+h_1)})(e^{ia_1}-z_1)}\;.
  \end{eqnarray*}
 We have that 
 \begin{eqnarray*}
 (z_1-e^{i(a_1-h_1)})(z_1-e^{i(a_1+h_1)})&=& ((e^{ia_1}-z_1)^2 +2e^{ia_1}z_1(1-\cos h_1),
 \end{eqnarray*}
 so the modulus of the denominator of $ iK_1(a_1,h_1,z_1)$ is 
 \begin{eqnarray*}
 \abs{(z_1-e^{i(a_1-h_1)})(z_1-e^{i(a_1+h_1)})(e^{ia_1}-z_1)}&=&\abs{(e^{ia_1}-z_1)^2+2e^{ia_1}z_1(1-\cos h_1)}\abs{e^{ia_1}-z_1}\\
  &\geq & \abs{\left(e^{ia_1}-z_1\right)^2-h_1^2}\abs{e^{ia_1}-z_1}\;.
 \end{eqnarray*}
 The last inequality is obtained by noticing that $\abs{z_1}<1, \abs{e^{ia_1}}=1$, and $1-\cos h_1\leq \frac{h_1^2}{2}$. Now consider $D_1=\{z_1\in \mathbb{D}: \abs{e^{ia_1}-z_1}>2h_1\}$. For $z_1\in D_1$, the last inequality implies that 
 \[\abs{(z_1-e^{i(a_1-h_1)})(z_1-e^{i(a_1+h_1)})(e^{ia_1}-z_1)}\geq \frac{3}{4}\abs{e^{ia_1}-z_1}^3\geq 6h_1^3.\]
 On the other hand, the modulus of the numerator of $ iK_1(a_1,h_1,z_1)$ is bounded by 2$h_1^2$ on $D_1$, so that on $D_1$, one has 
 \begin{equation}\label{eqn:bbK1}
 \abs{ K_1(a_1,h_1,z_1)}\leq \frac{8}{3}\frac{h^2}{\abs{e^{ia_1}-z_1}^3}\leq \frac{1}{3h_1}\;.
 \end{equation}

 \noindent Now let $z_1\in D_1^c$. Then we have that \[\abs{z_1-e^{i(a_1-h_1)}},\abs{z_1-e^{i(a_1+h_1)}},\abs{e^{ia_1}-z_1}\leq 4h_1\;.\]
 Put \[\Phi_n^*=\set{z_1\in \mathbb{D}: 2^{1-n}h_1< \abs{e^{ia_1}-z_1}\leq 2^{2-n}h_1}=\set{z_1\in \mathbb{D}: \frac{2^{n-2}}{h_1} \leq \frac{1}{\abs{e^{ia_1}-z_1}} < \frac{2^{n-1}}{h_1}}\;.\]
 We note that $(0,1]=\bigcup\limits_{n=0}^{\infty}\left(\frac{1}{2^{n+1}},\frac{1}{2^n}\right]$. 
  It follows that 
 \begin{equation}\label{eqn:Dcomplement}
 \begin{aligned}
 D_1^c& \subseteq \set{z_1\in \mathbb{D}: \abs{e^{ia_1}-z_1}\leq 4h_1}\\
 &\subseteq \bigcup\limits_{n=0}^{\infty}\set{z_1\in \mathbb{D}: \frac{4h_1}{2^{n+1}}< \abs{e^{ia_1}-z_1}\leq \frac{4h_1}{2^{n}}}\\
 &= \bigcup\limits_{n=0}^{\infty}\Phi_n^*\;.
 \end{aligned}
 \end{equation}
Thus, there exists an integer $n$ such that 
\[\abs{K_1(a_1,h_1,z_1)}\leq \frac{1}{\abs{z_1-e^{i(a_1-h_1)}}}+\frac{1}{\abs{z_1-e^{i(a_1+h_1)}}}+\frac{1}{\abs{e^{ia_1}-z_1}}\leq 3 \frac{2^{n-1}}{h_1}\;.\]
We conclude by taking $C_1=\max\{3 \frac{2^{n-1}}{h_1}, \frac{1}{3h_1}\}$ and $C_2=3\sqrt{4(\ln(2))^2+\pi^2}$\;.
\end{proof}
\begin{proof}[{\bf Proof of Lemma \ref{lemma3}}]\mbox{}\\

\noindent Part (a): 
Put $J=[a-h,a+h]$ for some real numbers $a$ and $h>0$. Let $N$ be the smallest integer such that $2^Nh\geq 1$. Then for all $n\leq N$, and $z\in D_1$, we have $2^nh<\abs{e^{ia}-z}<2<2^{n+1}h$. \\
\noindent Also we observe that  $\set{ z=re^{i\xi}\in \mathbb{D}:  \abs{e^{ia}-z}\leq \nu}\subseteq \set{z=re^{i\xi}\in \mathbb{D}: r\geq 1-\nu,~ \abs{\xi-a} \leq \nu}$.  Put $\Phi_n=\{z\in \mathbb{D}: ~2^nh\leq \abs{e^{ia}-z}\leq 2^{n+1}h\}$ and \[ U_1= \int\int_{D_1}K_1(a,h,z)w(\xi)d\xi dr,\quad U_2= \int\int_{D_1^c}K_1(a,h,z)w(\xi)d\xi dr\;.\]

It follows that 
\begin{eqnarray*}
U_1 &\leq& \frac{8h^2}{3} \sum_{n=0}^N \int\int_{\Phi_n}\frac{w(\xi)}{\abs{e^{ia}-z}^3}d\xi dr\\
&\leq & \frac{8h^2}{3} \sum_{n=0}^N \frac{1}{(2^nh)^2}\int_{Q_{2^{n+1}h}(a)} w(\xi)d\xi \\
&\leq & C \sum_{n=0}^N \frac{1}{(2^nh)^2} \int_{Q_{2^{n}h}(a)} w(\xi)d\xi,\quad \mbox{since $w$ is doubling} \\
&\leq & C \sum_{n=0}^N  \int_{2^nh\leq \abs{\xi-a}\leq 2^{n+1}h} \frac{w(\xi)}{(\xi-a)^2}d\xi=C\int_h^{2^{N+1}h} \frac{w(\xi)}{(\xi-a)^2}d\xi \\
&\leq & C \frac{w(J)}{\abs{I}^2} 
\left(\frac{\abs{J}^2}{w(J)}\int_{\xi \notin J} \frac{w(\xi)}{(\xi-a)^2}d\xi \right)<Cw(J), \quad \mbox{since $w \in \mathcal{B}_2$}\\
\end{eqnarray*}
For $z\in D_1^c$, we have 
 \begin{eqnarray*}
 \int\int_{D_1^c}
 \frac{w(\xi)}{\abs{e^{ia}-z}}d\xi dr &\leq  & \sum_{n=0}^{\infty}\int\int_{\Phi_n^*} \frac{w(\xi)}{\abs{e^{ia}-z}}d\xi dr \quad \mbox{using equation \eqref{eqn:Dcomplement}} \\
 &\leq & C \sum_{n=0}^{\infty}\frac{2^n}{h}\int_{1-2^{2-n}h}^{1}\int_{Q_{2^{2-n}h}(a)} w(\xi) d\xi dr \quad \mbox{using again  equation \eqref{eqn:Dcomplement}} \\
  &\leq & C \sum_{n=0}^{\infty}\int_{Q_{2^{1-n}h}(a)} w(\xi) d\xi,  \quad \mbox{since $w$ is a doubling} \\
   &\leq & C \int_{Q_{2h}(a)} w(\xi) d\xi \leq C \int_{Q_{h}(a)} w(\xi) d\xi=Cw(J)\;.
 \end{eqnarray*}
 It follows that for $z\in D_1^c$, 
 \begin{eqnarray*}
U_2 &\leq& Cw(J)\;.
 \end{eqnarray*}
 
 \noindent Part (b): Suppose $w\in \pD_1\cap \pB_2$ such that $w(\xi,r)\equiv \frac{w(1-r)}{1-r}$. Put \[V_1=\int\int_{D_1}K_1(a,h,z)\frac{w(1-r)}{1-r}drd\xi,\quad V_2=\int\int_{D_1^c}K_1(a,h,z)\frac{w(1-r)}{1-r}drd\xi\;.\] Using equation \eqref{eqn:bbK1} above, we have that 
 \begin{eqnarray*}
 V_1 &\leq& \frac{8h^2}{3}\int\int_{D_1} \frac{1}{\abs{e^{ia}-z}^3}\frac{w(1-r)}{1-r}drd\xi\\
&\leq & \frac{8h^2}{3} \sum_{n=0}^N \frac{1}{(2^nh)^3}\int_{-\abs{a}-2^{n+1}h}^{\abs{a}+2^{n+1}h}\int_{1-2^{n+1}h}^1\frac{w(1-r)}{1-r}dr d\xi \\
&\leq & \frac{16h^2}{3} \sum_{n=0}^N \left(\frac{\abs{a}}{(2^nh)^3}+\frac{1}{(2^nh)^2}\right) \int_0^{2^{n+1}h} \frac{w(u)}{u}du,\quad \mbox{by change of variable $u=1-r$}\\
&\leq & \frac{16h^2C}{3} \sum_{n=0}^N  \left(\frac{\abs{a}}{(2^nh)^3}+\frac{1}{(2^nh)^2}\right) w(2^{n}h),\quad \mbox{since $w\in \pD_1$} \\
&=& C(S_{11}+S_{12})
 \end{eqnarray*}
 On one hand 
 \begin{eqnarray*}
S_{11} &=& \sum_{n=0}^N  \frac{1}{(2^nh)^2} w(2^{n}h)\\
&=&  \sum_{n=0}^N \int_{a-2^{n+1}h}^{a+2^{n+1}h}\frac{w(2^{n}h)}{(2^nh)^3}du\leq C \sum_{n=0}^N \int_{a-2^{n+1}h}^{a+2^{n+1}h}\frac{w(u)}{(u)^3}du,\quad \mbox{since $w$ increasing and $u^3<8(2^nh)^3$} \\
&\leq & C \int_{a-h}^{a+h} \frac{w(u)}{u^3}du  \leq C \int_{a-h}^1 \frac{w(u)}{u^3}du\leq \quad \mbox{since $J\subseteq [0,1]$}\\
&\leq & C \frac{w(a-h)}{(a-h)^2}= Chw(a-h),\quad \mbox{since $w\in \pB_2$}\\
&\leq & Cw(J),\quad \mbox{since $w$ is increasing}\;.
 \end{eqnarray*}
 
 On the other hand, we know that $\pD_1\cap \pB_2\subseteq \mathcal{D}=\bigcup\limits_{p>1}\mathcal{B}_p $. Let $p>1$ such that $w\in \mathcal{B}_p$. 
 \begin{eqnarray*}
 S_{12} &=& \sum_{n=0}^N \frac{\abs{a}}{(2^nh)^3} w(2^{n}h)\\
&\leq & \abs{a} (N+1) \left(\sum_{n=0}^N \frac{1}{(2^nh)^3}\right) w(2^{N}h),\quad \mbox{since $w$ is increasing}\\
&\leq & C   \frac{ w(2^{N}h)}{(2^Nh)^{p-1}} \quad \mbox{where $\ds C=\abs{a} (N+1) (2^Nh)^{p-1} \left(\sum_{n=0}^N \frac{1}{(2^nh)^3}\right)$}\\
&\leq & C \int_{2^Nh}^{2^{N+1}h} \frac{ w(2^{N}h)}{(2^Nh)^{p}} du\\
&\leq & 2^pC \int_{2^Nh}^{2^{N+1}h} \frac{ w(u)}{u^p} du\quad \mbox{since $w$ is increasing }\\
&\leq & 2^p C \frac{w(J)}{\abs{J}^p}\left(\frac{\abs{J}^p}{w(J)} \int_{u\notin J} \frac{ w(u)}{u^p} du\right)<Cw(J),\quad \mbox{since $2^{N}h\geq 1$ and $w \in \mathcal{B}_p$.}
 \end{eqnarray*}
Now 
\begin{eqnarray*}
V_2&\leq & \sum_{n=0}^{\infty} \int\int_{\Phi_n^*} \frac{1}{\abs{e^{ia}-z}}\frac{w(1-r)}{1-r}d\xi dr\\
&\leq &  \sum_{n=0}^{\infty} \int_{1-2^{2-n}h}^1\int_{-\abs{a}-2^{2-n}h}^{\abs{a}+2^{2-n}h} \frac{1}{\abs{e^{ia}-z}}\frac{w(1-r)}{1-r}d\xi dr\\
&\leq &  \sum_{n=0}^{\infty} \frac{2^{n-1}}{h}\int_{1-2^{2-n}h}^1\int_{-\abs{a}-2^{2-n}h}^{\abs{a}+2^{2-n}h} \frac{w(1-r)}{1-r}d\xi dr,\quad \mbox{since $\frac{1}{\abs{e^{ia}-z}} < \frac{2^{n-1}}{h}$ on $\Phi_n^*$}\\
&\leq &  \sum_{n=0}^{\infty} \frac{2^{n}}{h} (\abs{a}+2^{2-n}h)\int_0^{2^{2-n}h} \frac{w(u)}{u} du,\quad \mbox{after the change of variable $u=1-r$}\\
&=&S_{21}+S_{22}\;.
\end{eqnarray*}
On one hand, 
\begin{eqnarray*}
S_{21}&=&\sum_{n=0}^{\infty} \frac{2^{n}}{h}2^{2-n}h \int_0^{2^{2-n}h} \frac{w(u)}{u} du =4 \sum_{n=0}^{\infty} \int_0^{2^{2-n}h} \frac{w(u)}{u} du=4  \int_0^{4h} \frac{w(u)}{u} du\\
&\leq & C w(4h)\leq Cw(h)\leq Cw(J), \quad \mbox{since $w\in \pD_1\cap \pB_2\subseteq \mathcal{D}$}\;.\\
\end{eqnarray*}
On the other hand

\begin{eqnarray*}
S_{22}&=& \abs{a} \sum_{n=0}^{\infty} \frac{2^{n}}{h} \int_0^{2^{2-n}h} \frac{w(u)}{u} du\\
&\leq & C \sum_{n=0}^{\infty} \frac{2^{n}}{h}  w(4\cdot 2^{-n}h),\quad \mbox{since $w\in \pD_2$}\\
&\leq & C \sum_{n=0}^{\infty} \frac{2^{n}}{h}  w(2^{-n}h),\quad \mbox{since $w\in \mathcal{D}$}\\
&\leq & C \sum_{n=0}^{\infty} \int_{2^{-n}h}^{2^{1-n}h}  \frac{w(u)}{u}du, \quad \mbox{since $\frac{1}{2^{1-n}h}\leq \frac{1}{u}\leq \frac{1}{2^{-n}h}$} \\
&\leq & C \int_0^{2h} \frac{w(u)}{u}du\leq Cw(2h)\leq Cw(J), \quad \mbox{since $w\in \mathcal{D}$}.
\end{eqnarray*}
\end{proof}

 \begin{proof}[{\bf Proof of Lemma \ref{lemma4}}]\mbox{}\\
 We will start again with the case $d=2$. We know from above that 
  \begin{equation*}
  iK_1(a_1,h_1,z_1)= \frac{2e^{ia_1}(z_1+e^{ia_1})(1-\cos h_1)}{(e^{ia_1}-z_1)((e^{ia_1}-z_1)^2 +2e^{ia_1}z_1(1-\cos h_1))}\;.
  \end{equation*}
  Let us choose $h_1$ and $a_1$ so that $h_1<\abs{e^{ia_1}-z_1}$. Then $\abs{z_1+e^{ia_1}}\geq 2-h_1$.  Since $1-\frac{h_1^2}{2}\leq \cos h_1\leq 1-\frac{h_1^2}{2}+\frac{h_1^4}{24}$, we obtain
  \begin{equation}\label{eqnK1}
  \begin{aligned}
  \abs{K_1(a_1,h_1,z_1)} &\geq  \frac{(2-h_1)\left(h_1^2-\frac{h_1^4}{12}\right)}{\abs{(e^{ia_1}-z_1)}\abs{(e^{ia_1}-z_1)^2 +2e^{ia_1}z_1(1-\cos h_1)}}\\
  &\geq \frac{(2-h_1)\left(h_1^2-\frac{h_1^4}{12}\right)}{\abs{e^{ia_1}-z_1}^3}=\frac{Ch_1^2}{\abs{e^{ia_1}-z_1}^3}\;.
  \end{aligned}
  \end{equation}
  Now let us choose $a_2$ and $h_2$ such that $1<h_2<\abs{e^{ia_2}-z_2}$.
  We also know  from above that 
\[K_2(a_2,h_2,z_2)=\frac{2}{i}\left[\ln(e^{i(a_2-h_2)}-z_2)+\ln(e^{i(a_2+h_2)}-z_2)-2\ln(e^{ia_2}-z_2)\right]\;.\]
We know that $\abs{\ln(e^{ia_2}-z_2)}\geq \ln(\abs{(e^{ia_2}-z_2)})\geq \ln (h_2)>0.$
Therefore 
\begin{equation}\label{eqnK2}
\abs{K_2(a_2,h_2,z_2)}\geq 4\ln(h_2)\;.
\end{equation}
Combining both equations \eqref{eqnK1} and \eqref{eqnK2}, it  follows that for $h_1<\abs{e^{ia_1}-z_1}$ and $1<h_2<\abs{e^{ia_2}-z_2}$ 
\[\abs{f_1(z_1,z_2)}\geq \frac{C(h_1)h_1^2}{\abs{e^{ia_1}-z_1}^3}\;.\]
Now for $d\geq 2$, we can generalize it so that given $1\leq j\leq d$ and $l\neq j$ one has 
\begin{equation}\label{lowerbdfj}
\abs{f_j(\bm{z})}\geq \frac{C(h_j)h_j^2}{\abs{e^{ia_j}-z_j}^3}, \quad \mbox{for}\quad  h_j<\abs{e^{ia_j}-z_j},\quad  1<h_l<\abs{e^{ia_l}-z_l}\;.
\end{equation}
Assume that $z_j=r_ie^{i\xi_j}$ with $r_j<1$.
Then 
\begin{eqnarray*}
\abs{e^{ia_j}-z_j}^2&=& 1-r_j^2-2r_j\cos(\xi_j-a_j)\\
&\leq & (1-r_j)^2+(\xi_j-a_j)^2\;.
\end{eqnarray*}
Let $D_j=\{z_j\in \mathbb{D}: h_j<\abs{e^{ia_j}-z_j} \}$ and $D_j^*=\{z_j\in \mathbb{D}: h_j<\xi_j<a_j+1\}$. Note that  then $D_j\cap D_j^*\neq \emptyset$.   We have that 
\begin{eqnarray*}
\int\int_{\mathbb{D}}\abs{f_j(\bm{z})}w_j(\xi)d\xi_jdr_j&\geq& h_j^2 \int\int_{D_j}\frac{w_j(\xi_i)}{\abs{e^{ia_j}-z_j}^3}d\xi_jdr_j\\
&\geq& h_j^2 \int_0^1\int_{\xi>h_j}\frac{w_j(\xi_i)}{\left((1+r_j)^2+(\xi_j-a_j)^2\right)^{3/2}}d\xi_jdr_j\\
&\geq& h_j^2 \int_{\xi>h_j}\frac{w_j(\xi_i)}{(\xi_j-a_j)^2}d\xi_j \quad \mbox{since $x^{-3}>x^{-2}$ on (0,1)}\\
&\geq& h_j^2 \int_{\xi>h_j}\frac{w_j(\xi_i)}{\xi_j^2}d\xi_j \quad \mbox{since $0<h_j<\xi_j<a_j+1$}\;.\\
\end{eqnarray*}
 \end{proof}
 \begin{proof}[{\bf Proof of Lemma \ref{lemma5}}]
Firstly,  let \[D_{1j}=\set{z_j\in  \mathbb{D}: h_j<\abs{e^{ia_j}-z_j}}.\] 
Therefore, if $1-r_j< \abs{\xi_j-a_j}$, we have $h_j\leq \abs{e^{ia_j}-z_j} \leq \sqrt{2}(\xi_j-a_j)$. There are two possibilities: either $1-r_j<h_j$ or $1-r_j\geq h_j$. So let us consider the subset  \[D_{1j}^*=\set{z_j\in \mathbb{D}: h_j\leq 1-r_j<\abs{\xi_j-a_j}}\] of $D_{1j}$. It follows from equation \eqref{lowerbdfj} that
\begin{eqnarray*}
\int\int_{\mathbb{D}}\abs{f_j(\bm{z})} \frac{w_j(1-r_j)}{1-r_j}d\xi_jdr_j&\geq& \int\int_{D_{2j}}\abs{f_j(\bm{z})} \frac{w_j(1-r_j)}{1-r_j}d\xi_jdr_j \\
&\geq &C(h_j)\int\int_{D_{1j}}\frac{h_j^2}{\abs{e^{ia_j}-z_j}^3} \frac{w_j(1-r_j)}{1-r_j}d\xi_jdr_j\\
&\geq& \frac{h_j^2C(h_j)}{4\sqrt{2}}\int\int_{D_{1j}}\frac{1}{(\xi_j-a_j)^3} \frac{w_j(1-r_j)}{1-r_j}d\xi_jdr_j\\
&=&\frac{h_j^2C(h_j)}{4\sqrt{2}}\int_{0}^{1-h_j} \frac{w_j(1-r_j)}{1-r_j}\rb{\int^{\pi}_{1-r_j+a_j}\frac{1}{(\xi_j-a_j)^3}d\xi_j} dr_j\\
&\geq &C_j\int_{h_j}^{1} \frac{w_j(u_j)}{u_j^3}du_j\;.\\
\end{eqnarray*}
The latter inequality is obtained after the changes of variable $\xi-a=\theta$ and $1-r=u$ respectively,   with $\ds C_j=\frac{h_j^2C(h_j)}{\sqrt{2}}$. 

\noindent Secondly, consider \[D_{2j}=\set{z_j\in  \mathbb{D}: \abs{e^{ia_j}-z_j}\leq \frac{h_j}{4}}.\] 
As above, if $\abs{\xi_j-a_j}<1-r_j$, we have $\abs{e^{ia_j}-z_j}\leq \sqrt{2}(1-r_j)$. So either $\sqrt{2}(1-r_j)<\frac{h_j}{4}$ or $\sqrt{2}(1-r_j)\geq \frac{h_j}{4}$. Thus, consider the subset 
\[D_{2j}^*=\set{z_j\in \mathbb{D}: \abs{\xi_j-a_j}<1-r_j<\frac{h_j}{4\sqrt{2}}}\] of $D_{2j}$. Therefore, we have
\begin{eqnarray*}
\int\int_{\mathbb{D}}\abs{f_j(\bm{z})} \frac{w_j(1-r_j)}{1-r_j}d\xi_jdr_j&\geq&  \int\int_{D_{2j}}\abs{f_j(\bm{z})} \frac{w_j(1-r_j)}{1-r_j}d\xi_jdr_j\\
&\geq &C(h_j)\int\int_{D_{2j}}\frac{h_j^2}{\abs{e^{ia_j}-z_j}^3} \frac{w_j(1-r_j)}{1-r_j}d\xi_jdr_j\\
&=&\frac{C(h_j)}{16\sqrt{2}}\int_{0}^{1-\frac{h_j}{4\sqrt{2}}} \frac{w_j(1-r_j)}{(1-r_j)^2}\rb{\int_{a_j+r_j-1}^{a_j+1-r_j}d\xi_j} dr_j\\
&=&C_j\int_{0}^{\frac{h_j}{4\sqrt{2}}} \frac{w_j(u_j)}{u_j}du_j,\\
\end{eqnarray*}
after the change of variable $u=1-r$ with $\ds C_j=\frac{C(h_j)}{16\sqrt{2}}\;.$
 \end{proof}
\section{Conclusion} \label{conclusion}
We have proposed in this paper a space that acts as the analytic extension of the so-called special atom spaces in higher dimensions. What we find interesting and remarkable in these spaces is their apparent simplicity. Certainly one could think of atoms defined on intervals that are different from the characteristic functions on these intervals, say for instance polynomial type of atoms. However, the properties like orthonormality would be difficult to prove, especially in higher dimensions. The results in this paper open the door to exploring lacunary sequences in Bergman-Besov-Lipschitz spaces in higher dimensions. Another idea that is no more far-fetched in the idea of Blaschke-products in these spaces, that are non-complex in their inception but have complex extensions. 
\section{Data Availability}
Data sharing not applicable to this article as no datasets were generated or analyzed during the current study.
\bibliography{MyBib222}

\end{document}